\providecommand*{\cupdot}{%
	\mathbin{%
		\mathpalette\@cupdot{}%
	}%
}
\newcommand*{\@cupdot}[2]{%
	\ooalign{%
		$\m@th#1\cup$\cr
		\hidewidth$\m@th#1\cdot$\hidewidth
	}%
}
\newtheorem{theorem}{Theorem}
\newtheorem{lemma}{Lemma}
\newtheorem{proposition}{Proposition}
\newtheorem{remark}{Remark}
\newtheorem{definition}{Definition}
\newtheorem{corollary}{Corollary}
\def\endpf{{\ \hfill\hbox{\vrule width1.0ex height1.0ex}\parfillskip 0pt
	}}
	\newenvironment{proof}{\noindent{\bf Proof:}}{\endpf}
\begin{document}
	\title{Recursive construction of a Nash equilibrium in a two-player nonzero-sum  stopping game with asymmetric information}
	\author{Royi Jacobovic  \thanks{The author is with the Korteweg-de Vries Institute for Mathematics, University of Amsterdam, Science Park 904, 1098 XH Amsterdam, the Netherlands.
			{\tt royi.jacobovic@mail.huji.ac.il.}} \thanks{Supported by the GIF Grant 1489-304.6/2019}}
	
	\date{\today}
	
	\maketitle

	\begin{abstract}
		We study a discrete-time finite-horizon  two-players nonzero-sum  stopping game where the filtration of Player 1 is richer than the filtration of Player 2. A major difficulty which is caused by the information asymmetry is that Player 2 may not know whether Player 1 has already stopped the game or not. Furthermore, the classical backward-induction approach is not applicable in the current setup. This is because when the informed player decides not to stop, he reveals information to the uninformed player and hence the decision of the uninformed player at time $t$ may not be determined by the play after time $t$, but also on the play before time $t$. In the current work we initially show that the expected utility of Player 2 will remain the same even if he knows whether Player 1 has already stopped. Then, this result is applied in order to prove that, under appropriate conditions, a recursive construction in the style of Hamad\'ene and Zhang (2010) converges to a pure-strategy Nash equilibrium.  
	\end{abstract}
	
	\section{Introduction}\label{sec: introduction}
	\subsection{Stopping games}
	Dynkin \cite{Dynkin1967} considered a game with two players who observe a stochastic process $(R_t,X_t)_{t=0}^\infty$. Player $1$ (resp. 2) picks a stopping time $\tau$ (resp. $\nu$) such that $P(X_{\tau}\leq0,X_{\nu}>0)=1$. The purpose of Player 1 (resp. 2) is to maximize (resp. minimize) the expected value of $R_{\min\{\tau,\nu\}}$. Dynkin proved that once $\sup_t R_t$ is integrable, the game has a value. Kifer \cite{Kiffer1971} and Neveu \cite{Neveu1975} considered a similar game in which two players observe a three-dimensional stochastic process $(X_t,Y_t,Z_t)_{t=0}^\infty$. Each player chooses a stopping time and the purpose of Player 1 (resp. 2) is to maximize (resp. minimize) the expected value of 
	\begin{equation}
	X_{\tau}\textbf{1}_{\{\tau<\nu\}}+Y_{\nu}\textbf{1}_{\{\tau>\nu\}}+Z_{\tau}\textbf{1}_{\{\tau=\nu\}}\,.
	\end{equation}
	In particular, it is possible that the players choose stopping times $\tau$ and $\nu$ such that $P(\tau=\nu)>0$. In two-player nonzero-sum stopping games, the processes $(X_t)$, $(Y_t)$ and $(Z_t)$ are two dimensional; the first (resp. second) coordinate is interpreted as the payoff of Player 1 (resp. Player 2). In general, the existence of an equilibrium point in these games is ensured when the joint distribution of the payoff processes satisfies some conditions. For example, see Morimoto \cite{Morimoto1986}, Mamer \cite{Mamer1987}, Ohtsubo \cite{Ohtsubo1987} and Hamad\'ene \textit{et al} \cite{Hamadene2010}. Also, some relevant literature reviews are given by  Sakaguchi \cite{Sakaguchi1995}, Novak \textit{et al.} \cite{Novak1999} and Ja\'skiewicz \textit{et al.} \cite{Jaskiewicz2018}.  
	
	Another strand of literature studies stopping games with random stopping times. See, e.g., Yasuda \cite{Yasuda1985}, Rosenberg \textit{et al.} \cite{Rosenberg 2001}, Shmaya \textit{et al.} \cite{Shmaya2004} for the discrete-time case and Laraki \textit{et al.} \cite{Laraki2013} for the continuous time case. Notably, there is an extensive literature about continuous time versions of stopping games. For example, consider the classical works of Bismut \cite{Bismut1977} and Lepeltier \textit{et al.} \cite{Lepeltier 1984}. More recent works in this direction are those of: Attard \cite{Attard2017,Attard2018}, De Angelis \textit{et al.} \cite{De Angelis2018a,De Angelis2018b}, Hamad\'ene \textit{et al.} \cite{Hamadene2010}. In addition, a continuous (resp. discrete) version of stopping games with more than two players was discussed by Hamad\'ene \textit{et al.} \cite{Hamadene2014b} (resp. Hamad\'ene \textit{et al.} \cite{Hamadene2014a}). 
	
	Stopping games are motivated by various applications. Traditionally, a task that has stimulated mathematicians to analyse models with unique implications on optimal-stopping theory is the celebrated secretary's problem (for surveys, see e.g., Ferguson \cite{Ferguson1989} and Freeman \cite{Freeman1983}). Some  versions of this problem with more than one decision maker were analyzed by Bruss \textit{et al.} \cite{Bruss1998}, Chen \textit{et al.} \cite{Chen1997}, Enns \textit{et al.} \cite{Enns1985}, Fushimi \cite{Fushimi1981},  and Sakaguchi \cite{Sakaguchi1980}. Some other applications of stopping games are:  Shrinking markets (see, e.g., Ghemawat \textit{et al.} \cite{Ghemawat}), duels (see, e.g., Lang \cite{Lang1975}) and financial markets with a recallable option (see, e.g., Hamad\'ene \textit{et al.} \cite{Hamadene2006,Hamadene2010} and Kifer \cite{Kifer2000}). 
	
	\subsection{Asymmetric information}
	Recently, there is a growing attention towards stopping games with asymmetric information. Lempa \textit{et al.} \cite{Lempa2013} considered a stopping game with a finite-horizon which is an exponentially distributed random variable, where only one player is exposed to the value of this random variable.  Esmaeeli \textit{et al.}    \cite{Esmaeeli2019} and Gr\"un \cite{Grun2013} presented two models in which the asymmetry of information is modelled as in the
	classical work of Aumann and Maschler \cite{Aumann1995}. Namely, there is a finite set of states of nature in which the game can take place. The true state is a random variable whose distribution is a common knowledge. The asymmetry in information is due to the assumption that the true state is exposed only to  Player 1. An extension of this framework was discussed by Renault \cite{Renault2006}. De Angelis \textit{et al.} \cite{De Angelis2021} considered a game with payoffs which are diffusion processes, where only one player knows the exact value of the drift and the diffusion coefficient. Mamer \cite{Mamer1987} and Ohtsubo \cite{Ohtsubo1991} considered a class of games with a monotone payoff structure where  players have information flows that are modelled as two possibly different filtrations. Gapaev \textit{et al.} \cite{Gapeev2021} considered a game with asymmetric information in the context of a financial market. In particular Gapaev \textit{et al.} \cite{Gapeev2021} refer to a solution concept which is called Stackelberg equilibrium (for an analysis of this solution concept in other stopping games see, e.g., the works of Ektr\"om \textit{et al.} \cite{Ekstrom2008} and Peshkir \cite{Peshkir2009}). Gensbittel \cite{Gensbittel2019} considered a two-player zero-sum stopping game with payoff processes which are determined by a Markov chain. The evolution of this chain is observed only by one of the Players.  Ashkenazi-Golan \textit{et al.} \cite{Ashkenazi-Golan2020} analyzed a two-player two-state Markov game such that only one of the players knows when the state changes.
	
	One major challenge in the analysis of stopping games with asymmetric information is due to the fact that they cannot be analysed by the classical backward-induction approach. To see why, notice that when an informed player decides not to stop, he reveals information to the uninformed players. Indeed, he reveals that his payoff by stopping in early stages is not too high. In particular, the decision of the uninformed player at time $t$ may not be determined by the play after time $t$, but also on the play before time $t$ (see also Skarupski \textit{et al.} \cite{Skarupski2018}). Evidently, as it was shown by Laraki \cite{Laraki2000}, a zero-sum stopping games with asymmetric information may not have a value even under the standard assumptions. However, there are also other cases in which an equilibrium result may be derived. For example, Laraki \textit{et al.} (see Section 2.8.4.3 of \cite{Laraki2015}) mentioned that for finite-horizon zero-sum stopping games with incomplete information on one side, the value exists in discrete-time using Sion's minmax theorem.
	
	\subsection{The current work}
	We study finite-horizon two-player nonzero-sum stopping games in discrete-time, where $Z\equiv X$: If the players stop simultaneously, the payoff is the same as if Player 1 stops alone. Similar assumptions appear in the works of Hamad\'ene \textit{et al.} \cite{Hamadene2010} and Ramsey \cite{Ramsey2007}. Formally,  consider an integer $0<T<\infty$  with some two discrete-time processes $(X_t^1,Y_t^1)_{t=0}^T$ and $(X_t^2,Y_t^2)_{t=0}^T$. We assume that Player 1 observes all four processes while Player 2 observes only $(X_t^2,Y_t^2)_{t=0}^T$. In addition, Player 1 (resp. 2) picks a stopping time $\tau$ (resp. $\nu)$ with respect to his information flow and his goal is to maximize the expectation of 
	\begin{equation}
	X^1_\tau\textbf{1}_{\left\{\tau\leq\nu\right\}}+Y^1_\nu\textbf{1}_{\left\{\tau>\nu\right\}}\ \ \left(\text{resp. }X^2_\nu\textbf{1}_{\left\{\nu<\tau\right\}}+Y^2_\tau\textbf{1}_{\left\{\tau\leq\nu\right\}}\right).
	\end{equation}
	One main difficulty in the analysis of this game is the fact that Player 1 may pick a stopping time $\tau$ which is not a stopping time with respect to the information flow of Player 2. Consequently, it is possible that Player 2 should decide whether to stop or keep playing without knowing whether Player 1 has already stopped. It is reasonable to expect that if Player 2 knew that Player 1 has not stopped yet, he could learn something new which might help him to gain greater utility. With some extent of surprise, we will show  that this intuition is wrong. Namely, Player 2 may always assume that Player 1 has not stopped yet and by doing so he will get the maximal expected utility that could be obtained under the assumption that he is informed about the stopping epoch of Player 1.  Then, this result is applied in order to prove that once $X_t^1\geq Y_t^1$ for every $0\leq t\leq T$, a recursive construction in the style of Hamad\'ene \textit{et al.} \cite{Hamadene2010} yields a pure-strategy Nash equilibrium. To see a crucial difference between the current work and Hamad\'ene \textit{et al.} \cite{Hamadene2010}, recall that Hamad\'ene \textit{et al.} \cite{Hamadene2010} studied stopping games with symmetric information. Thus, their construction yields stopping times which are measurable with respect to (w.r.t.) the information of Player 1 but not with respect to the information of Player 2. Hence, a proper modification of the construction is needed. 
	
	The rest of this paper is organized as follows: Section \ref{sec: description} contains a detailed presentation of the model assumptions with the precise statements of the main results. In addition, this section contains a sketch of the proof of the equilibrium result. This sketch is based on some benchmarks which are given with some intuitive explanations. For clarity, the technical proofs are supplied in Section \ref{sec: proof}.   
	
	\section{Game description and the main results}\label{sec: description}
	Let $T\in(0,\infty)$ be a positive integer and consider a probability space $(\Omega,\mathcal{F},P)$ with two filtrations $\mathbb{F}\equiv\left(\mathcal{F}_t\right)_{t=0}^T$ and $\mathbb{G}=\left(\mathcal{G}_t\right)_{t=0}^T$ such that $\mathcal{G}_t\subseteq\mathcal{F}_t$ for every $0\leq t\leq T$. Denote the set of all $\mathbb{F}$ (resp. $\mathbb{G}$)-stopping times by $\mathcal{T}(\mathbb{F})$ $\left(\text{resp. }\mathcal{T}(\mathbb{G})\right)$. Let $\mathcal{D}$ be the space of all $\mathbb{F}$-adapted $\mathbb{R}$-valued  processes $\left(S_t\right)_{t=0}^T$ such that 
	
	\begin{equation}\label{eq: integrability}
	E\left(\max_{0\leq t\leq T}|S_t|\right)<\infty\,.
	\end{equation}
	Let $X^1,X^2,Y^1,Y^2\in\mathcal{D}$ such that $X^2$ and $Y^2$ are adapted to $\mathbb{G}$. Consider a two-player stopping game $\Gamma$ in which: 
	\begin{itemize}
		\item Player 1 picks $\tau\in\mathcal{T}(\mathbb{F})$ and Player 2 picks $\nu\in\mathcal{T}(\mathbb{G})$.
		
		\item For every $\tau\in\mathcal{T}(\mathbb{F})$ and $\nu\in\mathcal{T}(\mathbb{G})$ the expected utility of Player 1 is given by
		\begin{equation}\label{eq: J_1}
		J_1(\tau,\nu)\equiv E\left[X^1_\tau\textbf{1}_{\left\{\tau\leq\nu\right\}}+Y^1_\nu\textbf{1}_{\left\{\tau>\nu\right\}}\right]\,,
		\end{equation}
		and the expected utility of Player 2 is given by
		\begin{equation}
		J_2(\tau,\nu)\equiv E\left[X^2_\nu\textbf{1}_{\left\{\nu<\tau\right\}}+Y^2_\tau\textbf{1}_{\left\{\tau\leq\nu\right\}}\right]\,.
		\end{equation}
	\end{itemize}
	
	\begin{remark}
		\normalfont Note that the assumption that $X^2$ and $Y^2$ are adapted to $\mathbb{G}$ practically means that Player 2 observes the evolution of his payoff processes over time.
	\end{remark}
	
	\begin{remark}
		\normalfont By the definitions of $J_1$ and $J_2$, Player 1 has a priority in stopping: The payoff if the two players stop simultaneously is the same as Player 1 stops alone.  
	\end{remark}
	
	\begin{definition}
		A pair $(\tau^*,\nu^*)\in\mathcal{T}(\mathbb{F})\times\mathcal{T}(\mathbb{G})$ constitutes a pure-strategy Nash equilibrium in $\Gamma$ if and only if (iff)
		\begin{equation}
		J_1(\tau,\nu^*)\leq J_1(\tau^*,\nu^*), \ \ \forall\tau\in\mathcal{T}(\mathbb{F})\,,
		\end{equation}
		and
		\begin{equation}
		J_2(\tau^*,\nu)\leq J_2(\tau^*,\nu^*), \ \ \forall\nu\in\mathcal{T}(\mathbb{G})\,.
		\end{equation}
	\end{definition}
	In the current work, we are going to prove that if 
	\begin{equation}\label{eq: sufficient condition}
	X_t^1\geq Y_t^1\ \ , \ \ \forall0\leq t\leq T\ , \ P\text{-a.s.},
	\end{equation}
	then $\Gamma$ admits a pure-strategy Nash equilibrium which follows from a construction in the style of Hamad\'ene \textit{et al.} \cite{Hamadene2010}.
	
	In the rest of this section, there is an elaboration which is necessary in order to make this statement more precise. In addition, we will review some important benchmarks like the analysis of the best response of Player 2 and the recursive construction which yields the equilibrium. Especially, the results which appear in the following Subsection 2.1 and Subsection 2.2 do not require the condition which appears in \eqref{eq: sufficient condition}.
	\subsection{The best response of Player 2}\label{subsec: best-response}
	For every $0\leq t\leq T$, let $\mathcal{T}_t(\mathbb{F})$ (resp. $\mathcal{T}_t(\mathbb{G})$)  be the set of all $\mathbb{F}$ (resp. $\mathbb{G}$)-stopping times $\tau$ such that $P(t\leq\tau\leq T)=1$. Furthermore, for every $\tau\in\mathcal{F}$ define the filtration $\mathbb{G}^\tau\equiv\left(\mathcal{G}_t^\tau\right)_{t=0}^T$ as follows:
	\begin{equation}
	\mathcal{G}_t^\tau\equiv\mathcal{G}_t\vee\sigma\left\{\textbf{1}_{\left\{\tau\leq s\right\}};0\leq s\leq t\right\}, \ \ \forall 0\leq t\leq T\,.
	\end{equation}
	Note that once Player 1 picks $\tau\in\mathcal{T}(\mathbb{F})$,  the only difference between Player 2 and an observer with a filtration $\mathbb{G}^\tau$ is the knowledge of whether Player 1 has already stopped. Thus, the following theorem states that even if Player 2 knows whether Player 1 has already stopped, his expected utility will remain the same. This is because at any moment $t$ such that $P(\tau>t)>0$, Player 2 (whose information flow is described by the filtration $\mathbb{G}$) may conduct his decision-making under the assumption that Player 1 has not stopped yet. 
	\begin{theorem}\label{theorem: conditional expectation}
		Let $\tau\in\mathcal{T}(\mathbb{F})$ and $t\in[0,T-1]$.  Then, there exists $\hat{\nu}_t\in\mathcal{T}_t(\mathbb{G})$ such that the next equality holds $P$-a.s.
		
		\begin{align}\label{eq: ess-sup}
		&E\left[X^2_{\hat{\nu}_t}\textbf{1}_{\left\{\hat{\nu}_t<\tau\right\}}+Y^2_{\tau}\textbf{1}_{\left\{\tau\leq\hat{\nu}_t\right\}}\big|\mathcal{G}_t^\tau\right]=\text{\normalfont ess}\sup_{\nu\in\mathcal{T}_t(\mathbb{G}^\tau)}E\left[X^2_\nu\textbf{1}_{\left\{\nu<\tau\right\}}+Y^2_{\tau}\textbf{1}_{\left\{\tau\leq\nu\right\}}\big|\mathcal{G}_t^\tau\right]\nonumber\\&=Y^2_\tau\textbf{1}_{\{\tau\leq t\}}+\textbf{1}_{\{\tau>t\}}\text{\normalfont ess}\sup_{\nu\in\mathcal{T}_t(\mathbb{G})}E_{P|\{\tau>t\}}\left[X^2_\nu\textbf{1}_{\{\nu<\tau\}}+Y^2_\tau\textbf{1}_{\{\tau\leq\nu\}}\big|\mathcal{G}_t\right]\,.
		\end{align}
	\end{theorem}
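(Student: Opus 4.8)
The plan is to read the object inside the essential supremum as a genuine finite-horizon optimal stopping problem in the enlarged filtration $\mathbb{G}^\tau$, to exhibit an optimizer there, and then to replace that optimizer by a $\mathbb{G}$-stopping time carrying exactly the same payoff. First I would record that $\tau\in\mathcal{T}(\mathbb{G}^\tau)$, since $\mathbf{1}_{\{\tau\le s\}}$ is $\mathcal{G}_s^\tau$-measurable by the very definition of $\mathbb{G}^\tau$. Consequently the process $\Phi_s\equiv X^2_s\mathbf{1}_{\{s<\tau\}}+Y^2_\tau\mathbf{1}_{\{\tau\le s\}}$ is $\mathbb{G}^\tau$-adapted (on $\{\tau\le s\}$ one has $Y^2_\tau\mathbf{1}_{\{\tau\le s\}}=\sum_{j\le s}Y^2_j\mathbf{1}_{\{\tau=j\}}$, which is $\mathcal{G}_s^\tau$-measurable), satisfies $E(\max_s|\Phi_s|)<\infty$ by \eqref{eq: integrability}, and the quantity to be maximised is simply $\Phi_\nu$. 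Standard finite-horizon optimal stopping theory (the Snell envelope in the filtration $\mathbb{G}^\tau$) then yields a stopping time $\nu^\ast\in\mathcal{T}_t(\mathbb{G}^\tau)$ attaining the first essential supremum.

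The core step is to match $\nu^\ast$ by a $\mathbb{G}$-stopping time. I would first prove the structural fact that, because $\sigma\{\mathbf{1}_{\{\tau\le r\}}:r\le s\}$ is generated by the finite partition $\{\tau=0\},\dots,\{\tau=s\},\{\tau>s\}$, every $B\in\mathcal{G}_s^\tau$ can be written as $\bigcup_{j\le s}(C_j\cap\{\tau=j\})\cup(C_\infty\cap\{\tau>s\})$ with $C_0,\dots,C_s,C_\infty\in\mathcal{G}_s$; indeed the collection of such sets is already a $\sigma$-algebra containing $\mathcal{G}_s$ and that partition. Applying this to $\{\nu^\ast=s\}$ produces sets $C^s_\infty\in\mathcal{G}_s$ with $\{\nu^\ast=s\}\cap\{\tau>s\}=C^s_\infty\cap\{\tau>s\}$, and I would then define $\hat\nu_t\equiv\inf\{s\ge t:\omega\in C^s_\infty\}\wedge T\in\mathcal{T}_t(\mathbb{G})$. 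The key claim is the pathwise identity $\{\hat\nu_t<\tau\}=\{\nu^\ast<\tau\}$ together with $\hat\nu_t=\nu^\ast$ on that event: if $\hat\nu_t=s<\tau$ then $\omega\in C^s_\infty\cap\{\tau>s\}=\{\nu^\ast=s\}\cap\{\tau>s\}$, while conversely $\nu^\ast=s<\tau$ forces $\omega\in C^s_\infty$ and, since any $r<s$ with $\omega\in C^r_\infty$ would give $\nu^\ast=r$, forces $\hat\nu_t=s$. On the complementary event both $\hat\nu_t\ge\tau$ and $\nu^\ast\ge\tau$, so both return $Y^2_\tau$; hence $\Phi_{\hat\nu_t}=\Phi_{\nu^\ast}$ a.s. Since $\hat\nu_t\in\mathcal{T}_t(\mathbb{G})\subseteq\mathcal{T}_t(\mathbb{G}^\tau)$, this establishes the first equality. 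I expect this matching — in particular the verification that $\hat\nu_t$ never stops strictly before $\tau$ on $\{\nu^\ast\ge\tau\}$ — to be the main obstacle, since it is precisely where the intuition ``Player 2 may assume that Player 1 has not stopped'' is cashed out rigorously.

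For the second equality I would split on the two $\mathcal{G}_t^\tau$-measurable events. On $\{\tau\le t\}$ every $\nu\in\mathcal{T}_t(\mathbb{G}^\tau)$ satisfies $\nu\ge t\ge\tau$, so $\Phi_\nu=Y^2_\tau$ and the essential supremum collapses to $Y^2_\tau\mathbf{1}_{\{\tau\le t\}}$. On $\{\tau>t\}$ I would use that $\{\tau>t\}$ is an atom of $\sigma\{\mathbf{1}_{\{\tau\le r\}}:r\le t\}$, so that $\mathcal{G}_t^\tau$ and $\mathcal{G}_t$ coincide on it; the standard conditioning-on-an-event identity then gives $\mathbf{1}_{\{\tau>t\}}E[Z\,|\,\mathcal{G}_t^\tau]=\mathbf{1}_{\{\tau>t\}}E_{P|\{\tau>t\}}[Z\,|\,\mathcal{G}_t]$ for every integrable $Z$. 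Combining this with the fact proved above that the matching construction maps $\mathcal{T}_t(\mathbb{G}^\tau)$ into $\mathcal{T}_t(\mathbb{G})$ without changing $\Phi_\nu$ — so that the essential suprema over $\mathbb{G}^\tau$- and over $\mathbb{G}$-stopping times agree — I would pass the essential supremum through multiplication by $\mathbf{1}_{\{\tau>t\}}$ and read off the right-hand side. The only remaining points are the measurability bookkeeping in the conditioning identity and the routine verification that the essential supremum commutes with the indicator $\mathbf{1}_{\{\tau>t\}}$, both of which are standard.
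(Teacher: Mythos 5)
Your proposal is correct, and it shares the paper's three-step skeleton: a structural description of the events of $\mathcal{G}_s^\tau$ (the paper's Lemma \ref{lemma: G_t^tau}), the construction of a $\mathbb{G}$-stopping time whose payoff agrees pathwise with that of an optimizer over $\mathcal{T}_t(\mathbb{G}^\tau)$, and a localization identity on $\{\tau>t\}$ (the paper's \eqref{eq: conditional expectations}). The genuine difference is in the matching step. The paper introduces the whole family $\{\nu^*_u\}_{t\le u\le T-1}$ of minimal-optimal stopping times, uses their first-hitting consistency $\{\nu_t^*=u\}=\{\nu_u^*=u\}\cap\bigcap_{s=t}^{u-1}\{\nu_s^*>s\}$, decomposes each $\{\nu_u^*=u\}$ by Lemma \ref{lemma: G_t^tau}, and then carries out the set-algebra computation \eqref{eq: master decomposition}--\eqref{eq: u>t} to arrive at $\{\tau>u\}\cap\{\nu_t^*=u\}=\{\tau>u\}\cap K_u$ with $K_u=A_u^0\setminus\bigcup_{s=t}^{u-1}A_s^0$. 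You instead decompose the level sets $\{\nu^*=s\}$ of a \emph{single} optimizer and verify the matching directly, the crucial point being the transfer observation: if $\tau>s>r\ge t$ and $\omega\in C_\infty^r$, then $\omega\in C_\infty^r\cap\{\tau>r\}=\{\nu^*=r\}\cap\{\tau>r\}$, forcing $\nu^*=r$. This eliminates both the appeal to minimality/consistency of the optimizers and the set algebra; moreover, since optimality of $\nu^*$ is never used, your matching applies to every $\nu\in\mathcal{T}_t(\mathbb{G}^\tau)$, which makes the coincidence of the essential suprema over $\mathcal{T}_t(\mathbb{G}^\tau)$ and over $\mathcal{T}_t(\mathbb{G})$ (the paper's \eqref{eq: redundant information}) an immediate consequence rather than a byproduct of attainment. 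Your two remaining deviations are executional but also sound: you prove the structure lemma by checking that the candidate collection is already a $\sigma$-algebra containing the generators (the paper runs a Dynkin $\pi$--$\lambda$ argument), and you get the localization identity from the equality of the traces of $\mathcal{G}_t^\tau$ and $\mathcal{G}_t$ on $\{\tau>t\}$ combined with the standard identity $\mathbf{1}_B E_P[Z\,|\,\mathcal{H}]=\mathbf{1}_B E_{P|B}[Z\,|\,\mathcal{H}]$ for $B\in\mathcal{H}$ (the paper runs a second Dynkin-system argument). What the paper's route buys is a fully explicit, self-contained computation; what yours buys is brevity and a strictly more flexible matching lemma. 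In a full write-up you should only make explicit (a) that $\hat\nu_t<\tau\le T$ forces $\hat\nu_t<T$, so the infimum defining $\hat\nu_t$ is attained on $\{\hat\nu_t<\tau\}$, and (b) the attainment of the first essential supremum by the Snell-envelope optimizer in $\mathbb{G}^\tau$ (the paper cites Theorem 1.2 of Peskir and Shiryaev for this); both are routine.
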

	The proof of Theorem \ref{theorem: conditional expectation} is supplied in Subsection \ref{subsec: theorem 1}.
	\subsection{Recursive construction}\label{subsec: recursive construction}
	The recursive construction is as follows: Let $\tau_1=\nu_1=T$ and for every $n\geq1$ define:
	\begin{align}
	&W^{1,n}_t\equiv\\&\text{ess}\sup_{\tau\in\mathcal{T}_t(\mathbb{F})}E\left[X^1_\tau\textbf{1}_{\left\{\tau<\nu_n\right\}}+\left(X^1_T\textbf{1}_{\{\nu_n=T\}}+Y^1_{\nu_n}\textbf{1}_{\{\nu_n<T\}}\right)\textbf{1}_{\left\{\tau\geq\nu_n\right\}}\big|\mathcal{F}_t\right] ,\ \ \forall t\in[0,T]\,,\nonumber\\& \widetilde{\tau}_{n+1}\equiv\inf\left\{t\geq0;W^{1,n}_t=X_t^1\right\}\wedge \nu_n\,,\\&\tau_{n+1}\equiv\begin{dcases}
	\widetilde{\tau}_{n+1}, & \widetilde{\tau}_{n+1}<\nu_n, \\
	\tau_n, & \text{o.w.}
	\end{dcases}
	\end{align}

	\begin{align}
	&W^{2,n}_t\equiv\text{ess}\sup_{\nu\in\mathcal{T}_t(\mathbb{G}^{\tau_n})}E\left[X^2_\nu\textbf{1}_{\left\{\nu<\tau_n\right\}}+Y^2_{\tau_n}\textbf{1}_{\left\{\tau_n\leq\nu\right\}}\big|\mathcal{G}_t^{\tau_n}\right]\ \ ,\ \ \forall t\in[0,T]\,,\\& \widetilde{\nu}_{n+1}\equiv\inf\left\{t\geq0;W^{2,n}_t=X^2_t\right\}\wedge \tau_n\,,\\&\nu_{n+1}\equiv\begin{dcases}
	\widetilde{\nu}_{n+1}, & \widetilde{\nu}_{n+1}<\tau_n, \\
	\nu_n, & \text{o.w.}
	\end{dcases}
	\end{align}
	By construction, both of the sequences $(\tau_n)_{n=1}^\infty$ and $(\nu_n)_{n=1}^\infty$ are nonincreasing and hence they have limits $\tau^*$ and $\nu^*$ as $n\to\infty$. When $\mathbb{F}$ and $\mathbb{G}$ are the same, Hamad\'ene \textit{et al.} \cite{Hamadene2010} showed (in an analogue continuous-time model) that $(\tau^*,\nu^*)$ is a pure-strategy Nash equilibrium. In our model, the following Proposition \ref{lemma: Hamadene} is an analogue statement. Its proof is provided in Subsection \ref{subsec: proposition 1}.
	
	\begin{proposition}\label{lemma: Hamadene}
		There exist limits 
		$\tau^*\equiv\lim_{n\to\infty}\tau_n$ and $\nu^*\equiv\lim_{n\to\infty}\nu_n$ such that:
		\begin{enumerate}
			\item $\tau^*\in\mathcal{T}(\mathbb{F})$ and $\nu^*\in\mathcal{T}(\mathbb{F})$.
			
			\item \begin{equation}
			J_1(\tau,\nu^*)\leq J_1(\tau^*,\nu^*), \ \ \forall\tau\in\mathcal{T}(\mathbb{F})\,.
			\end{equation}
			
			\item \begin{equation}
			J_2(\tau^*,\nu)\leq J_2(\tau^*,\nu^*), \ \ \forall\nu\in\mathcal{T}(\mathbb{G})\,.
			\end{equation}
		\end{enumerate}	
	\end{proposition}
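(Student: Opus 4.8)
The plan is to verify the three assertions in turn, taking first the monotone-limit and measurability questions, then Player~1's optimality, then Player~2's, the last resting essentially on Theorem~\ref{theorem: conditional expectation}. Since every stopping time here takes values in the finite set $\{0,1,\dots,T\}$ and, by construction, $(\tau_n)$ and $(\nu_n)$ are nonincreasing, each sequence stabilizes pointwise; hence $\tau^*=\lim_n\tau_n$ and $\nu^*=\lim_n\nu_n$ exist and are attained after finitely many (random) steps. That $\tau_n\in\mathcal{T}(\mathbb{F})$ is the standard output of discrete-time optimal stopping: $W^{1,n}$ is the Snell envelope of an integrable $\mathbb{F}$-adapted reward (integrability from \eqref{eq: integrability}), so its threshold time $\widetilde{\tau}_{n+1}$, and therefore $\tau_{n+1}$, is an $\mathbb{F}$-stopping time. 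The subtle point is $\nu_n\in\mathcal{T}(\mathbb{G})$, since a priori $\widetilde{\nu}_{n+1}$ is only a $\mathbb{G}^{\tau_n}$-stopping time. Here I would apply Theorem~\ref{theorem: conditional expectation} with $\tau=\tau_n$: it shows that on $\{\tau_n>t\}$ the value $W^{2,n}_t$ agrees with the $\mathcal{G}_t$-measurable essential supremum taken over $\mathcal{T}_t(\mathbb{G})$. Because $\nu_{n+1}$ equals $\widetilde{\nu}_{n+1}$ only on $\{\widetilde{\nu}_{n+1}<\tau_n\}$ and equals $\nu_n$ otherwise, the defining comparison $\{W^{2,n}_t=X^2_t\}$ is, on the relevant event, a $\mathbb{G}$-event; an induction on $n$ then gives $\nu_{n+1}\in\mathcal{T}(\mathbb{G})$, whence $\nu^*\in\mathcal{T}(\mathbb{G})\subseteq\mathcal{T}(\mathbb{F})$, which is stronger than assertion~1.

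Next I would convert the pointwise stabilization into a fixed-point property. For a.e.\ $\omega$ there is $N(\omega)$ with $\tau_n\equiv\tau^*$ and $\nu_n\equiv\nu^*$ for $n\ge N$. Passing to the limit in the two families of Snell envelopes, using \eqref{eq: integrability} and dominated convergence, yields limiting value processes $W^{1}$ and $W^{2}$ for which $\tau^*$ and $\nu^*$ are exactly the associated threshold hitting times (capped by $\nu^*$, resp.\ $\tau^*$). Consequently $\tau^*$ is optimal in Player~1's stopping problem when Player~2 plays $\nu^*$, and $\nu^*$ is optimal in Player~2's stopping problem when Player~1 plays $\tau^*$; the equilibrium assertions then amount to turning these two one-sided optimality statements into statements about $J_1$ and $J_2$.

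For assertion~2, condition \eqref{eq: sufficient condition} enters. Since $X^1_t\ge Y^1_t$ for all $t$, replacing any $\tau$ by $\tau\wedge\nu^*$ never lowers $J_1(\cdot,\nu^*)$: on $\{\tau>\nu^*\}$ the integrand rises from $Y^1_{\nu^*}$ to $X^1_{\nu^*}$ and is unchanged elsewhere. Thus $\sup_{\tau}J_1(\tau,\nu^*)=\sup_{\tau\le\nu^*}E[X^1_{\tau}]$, and I would run a verification argument: show that the limiting process $W^{1}$ is a supermartingale dominating the Player~1 reward which becomes a martingale when stopped at $\tau^*$, so that $J_1(\tau^*,\nu^*)=W^{1}_0\ge J_1(\tau,\nu^*)$ for every $\tau$. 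I expect the main obstacle to be precisely the reconciliation of the recursion's modified terminal term $Y^1_{\nu_n}$ on $\{\tau\ge\nu_n\}$, together with the update rule ``$\tau_{n+1}=\tau_n$ when $\widetilde{\tau}_{n+1}\ge\nu_n$'', with the true payoff $J_1$; this requires establishing the ordering $\tau^*\le\nu^*$ and a careful treatment of the event $\{\tau^*=\nu^*\}$, again leaning on $X^1\ge Y^1$.

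Assertion~3 is the cleaner half and is where Theorem~\ref{theorem: conditional expectation} is decisive. With $\tau^*$ fixed, optimality of the threshold time $\nu^*$ for Player~2's stopping problem in the filtration $\mathbb{G}^{\tau^*}$ gives $J_2(\tau^*,\nu^*)=\sup_{\nu\in\mathcal{T}(\mathbb{G}^{\tau^*})}J_2(\tau^*,\nu)$. Applying Theorem~\ref{theorem: conditional expectation} at $t=0$ then collapses the enlarged filtration: the essential supremum over $\mathcal{T}(\mathbb{G}^{\tau^*})$ is achieved by a $\mathbb{G}$-stopping time and coincides with the essential supremum over $\mathcal{T}(\mathbb{G})$. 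Hence $J_2(\tau^*,\nu^*)=\sup_{\nu\in\mathcal{T}(\mathbb{G})}J_2(\tau^*,\nu)\ge J_2(\tau^*,\nu)$ for every $\nu\in\mathcal{T}(\mathbb{G})$, which is exactly assertion~3.
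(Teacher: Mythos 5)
Your proof of assertion 1 contains a fatal error, and it is the step on which the rest of your argument leans. You claim that $\nu_{n+1}\in\mathcal{T}(\mathbb{G})$ by induction, hence $\nu^*\in\mathcal{T}(\mathbb{G})$; this is false in general. By \eqref{eq: tilde nu} one has $\widetilde{\nu}_{n+1}=\theta_{n+1}\wedge\tau_n$ with $\theta_{n+1}\in\mathcal{T}(\mathbb{G})$, so, up to null sets,
\begin{equation*}
\{\nu_{n+1}\leq t\}=\left(\{\theta_{n+1}\leq t\}\cap\{\theta_{n+1}<\tau_n\}\right)\cup\left(\{\nu_n\leq t\}\cap\{\theta_{n+1}\geq\tau_n\}\right),
\end{equation*}
and the event $\{\theta_{n+1}<\tau_n\}$, which decides whether the update is accepted, is determined by $\tau_n$ --- an $\mathbb{F}$-stopping time that Player 2 does not observe --- and need not belong to $\mathcal{G}_t$. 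Theorem \ref{theorem: conditional expectation} does not rescue this: it says that \emph{on the event} $\{\tau_n>t\}$ the value $W^{2,n}_t$ agrees with a $\mathcal{G}_t$-measurable random variable, but the event $\{\tau_n>t\}$ itself is not in $\mathcal{G}_t$; that is precisely the informational asymmetry the paper is about. (Your induction survives only to $\nu_2$, because $\tau_1=T$ is deterministic; it breaks at $\nu_3$.) What is true, and what the paper proves, is $\nu_n\in\mathcal{T}(\mathbb{G}^{\tau_n})\subseteq\mathcal{T}(\mathbb{F})$, which is exactly why assertion 1 claims only $\nu^*\in\mathcal{T}(\mathbb{F})$. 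The paper states explicitly, right after Proposition \ref{lemma: Hamadene}, that $\nu^*$ is \emph{not} necessarily a $\mathbb{G}$-stopping time; this is the entire reason for introducing $\theta^*$, Corollary \ref{cor: equivalence}, and the modified pair $(\tau^*,\theta^*)$ in Theorem \ref{thm: equilibrium}. If your claim were correct, those constructions would be superfluous.

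There are two further gaps. First, your argument for assertion 2 invokes \eqref{eq: sufficient condition}, but Proposition \ref{lemma: Hamadene} is stated --- and the paper emphasizes that it holds --- \emph{without} that condition; $X^1\geq Y^1$ enters only in Lemma \ref{lemma: auxilary} and Theorem \ref{thm: equilibrium}. Second, your ``fixed-point'' step (that the Snell envelopes $W^{1,n},W^{2,n}$ converge to value processes whose threshold hitting times are exactly $\tau^*,\nu^*$) is asserted rather than proved, and it is genuinely delicate: convergence of values does not preserve the equality sets $\{W^{i,n}_t=X^i_t\}$, so hitting-time identities need not pass to the limit; your verification argument for Player 1 and your optimality claim for $\nu^*$ in $\mathbb{G}^{\tau^*}$ both rest on it. The paper's route avoids all of this: following Hamad\'ene and Zhang, it shows by induction (via the discrete-time optimal stopping theorem of Peskir and Shiryaev) that $\tau_n\in\mathcal{T}(\mathbb{F})$ and $\nu_n\in\mathcal{T}(\mathbb{G}^{\tau_n})$, then proves for every $n\geq2$ the per-stage Nash inequalities $J_1(\tau,\nu_n)\leq J_1(\tau_n,\nu_n)$ and $J_2(\tau_n,\nu)\leq J_2(\tau_n,\nu_n)$ (the only new ingredient being that $\nu\in\mathcal{T}(\mathbb{G})$ implies $\nu\wedge\tau_n\in\mathcal{T}(\mathbb{G}^{\tau_n})$, so Player 2's deviations are covered), and finally passes to the limit in the payoff functionals $J_i$ using \eqref{eq: integrability}, dominated convergence, and the fact that the nonincreasing, integer-valued sequences $(\tau_n)$ and $(\nu_n)$ stabilize. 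No limit Snell envelope, no verification argument, and no condition on $X^1,Y^1$ are needed.
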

	Importantly, $\nu^*$ is not necessarily a $\mathbb{G}$-stopping time, and hence an equilibrium result will not follow from the above-mentioned construction unless some modifications are applied. Specifically, for each $n\geq1$ and $0\leq t\leq T$ define
	\begin{align}
	&V^n_t\equiv\begin{dcases}
	\text{ess}\sup_{\theta\in\mathcal{T}_t(\mathbb{G})}E_{P|\left\{\tau_n>t\right\}}\left[X^2_\theta\textbf{1}_{\left\{\theta<\tau_n\right\}}+Y^2_{\tau_n}\textbf{1}_{\left\{\tau_n\leq\theta\right\}}|\mathcal{G}_t\right], & P\left(\tau_n>t\right)>0, \\
	\infty, & \text{o.w.},
	\end{dcases}
	\\& \theta_{n+1}\equiv \inf\left\{t\geq0;V^n_t=X^2_t\right\}\wedge T\,.
	\end{align}	
	In particular, $\theta^*\equiv\inf_{n\geq2}\theta_n$ is a $\mathbb{G}$-stopping time as an infimum of a sequence of $\mathbb{G}$-stopping times. Moreover, for each $n\geq1$, Theorem \ref{theorem: conditional expectation} implies that
	\begin{align}\label{eq: tilde nu}
	\widetilde{\nu}_{n+1}&=\inf\left\{t\geq0;W^{2,n}_t=X^2_t\textbf{1}_{\left\{t<\tau_n\right\}}+Y^2_{\tau_n}\textbf{1}_{\left\{\tau_n\leq t\right\}}\right\}\\&=\inf\left\{t\geq0;Y^2_{\tau_n}\textbf{1}_{\{\tau_n\leq t\}}+V_t^n\textbf{1}_{\left\{t<\tau_n\right\}}=X^2_t\textbf{1}_{\left\{t<\tau_n\right\}}+Y^2_{\tau_n}\textbf{1}_{\left\{\tau_n\leq t\right\}}\right\}\nonumber\\&=\inf\left\{t\geq0; V_t^n=X_t^2\right\}\wedge\tau_n\nonumber\\&=\theta_{n+1}\wedge\tau_n\ \ , \ \ P\text{-a.s.}\nonumber
	\end{align}
	and hence we shall deduce that
	\begin{equation}\label{eq: nu*}
	\nu^*=\inf_{n\geq1}\left\{\theta_{n+1};\theta_{n+1}<\tau_n\right\}\wedge T, \ \ P\text{-a.s.}
	\end{equation}	
	This identity yields the next Corollary \ref{cor: equivalence} which is going to be useful for the proof of the equilibrium result in the next subsection.   
	\begin{corollary}\label{cor: equivalence}
		\begin{equation}\label{eq: iff}
		\nu^*<\tau^*\Leftrightarrow \theta^*<\tau^*\ \ , \ \ P\text{-a.s.}
		\end{equation}
	\end{corollary}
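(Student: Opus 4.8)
The plan is to read off the equivalence directly from the representation \eqref{eq: nu*}, i.e.\ $\nu^*=\inf_{n\geq1}\left\{\theta_{n+1};\theta_{n+1}<\tau_n\right\}\wedge T$, combined with the identity $\theta^*=\inf_{n\geq2}\theta_n=\inf_{n\geq1}\theta_{n+1}$ and the monotonicity $\tau_n\downarrow\tau^*$. All of the reasoning is pointwise on the full-measure set where \eqref{eq: nu*} holds.

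First I would record the ($P$-a.s.) inequality $\theta^*\leq\nu^*$. Indeed, in \eqref{eq: nu*} the infimum defining $\nu^*$ ranges over a subset of the family $\{\theta_{n+1}\}_{n\geq1}$, hence it is bounded below by $\inf_{n\geq1}\theta_{n+1}=\theta^*$; since also $\theta^*\leq\theta_2\leq T$, capping at $T$ cannot destroy this bound, so $\nu^*\geq\theta^*$. This settles one implication at once: on the event $\{\nu^*<\tau^*\}$ we get $\theta^*\leq\nu^*<\tau^*$, so $\nu^*<\tau^*\Rightarrow\theta^*<\tau^*$.

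For the converse I would argue pointwise on the event $\{\theta^*<\tau^*\}$. Since $\theta^*=\inf_{n\geq1}\theta_{n+1}<\tau^*$, the definition of the infimum forces the existence of an index $n$ with $\theta_{n+1}<\tau^*$ (were every term $\geq\tau^*$, the infimum $\theta^*$ would be $\geq\tau^*$, a contradiction). The key step is then to invoke $\tau_n\geq\tau^*$: the strict inequality upgrades to $\theta_{n+1}<\tau^*\leq\tau_n$, so this index satisfies the selection constraint $\theta_{n+1}<\tau_n$ appearing in \eqref{eq: nu*}. Therefore $\theta_{n+1}$ lies in the set over which the infimum in \eqref{eq: nu*} is taken, whence $\nu^*\leq\theta_{n+1}<\tau^*\leq T$, i.e.\ $\nu^*<\tau^*$.

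The main obstacle—and really the only delicate point—is bridging the unconstrained infimum $\theta^*$ and the constrained infimum defining $\nu^*$: a priori the indices realizing values close to $\theta^*$ need not satisfy the selection constraint $\theta_{n+1}<\tau_n$. The monotonicity $\tau_n\geq\tau^*$ is precisely what closes this gap, converting an inequality against the limit $\tau^*$ into an inequality against the running threshold $\tau_n$. I would also take brief care with the convention that the infimum over an empty index set equals $+\infty$ (capped by $T$); this is harmless, since on $\{\theta^*<\tau^*\}$ the relevant index set is nonempty by the argument above, and $\theta^*\leq T$ ensures the cap never interferes with the bound $\theta^*\leq\nu^*$.
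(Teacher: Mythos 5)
Your proof is correct, and it reaches the corollary from the same starting point as the paper --- the representation \eqref{eq: nu*} --- but the way you close the direction $\theta^*<\tau^*\Rightarrow\nu^*<\tau^*$ is genuinely different and more economical. The paper first produces a random index $n'$ with $\theta_{n'+1}<\tau_n$ for \emph{every} $n\geq1$, and then leaves \eqref{eq: nu*} and re-enters the recursive construction: it invokes \eqref{eq: tilde nu} to identify $\nu_{n'+1}=\theta_{n'+1}$, the monotonicity of $(\nu_n)_{n\geq1}$ to get $\nu^*\leq\theta_{n'+1}$, and an eventual-constancy claim $\tau_{n'}=\tau_{n'+1}=\cdots$ (justified ``by construction'') in order to convert the family of strict inequalities $\theta_{n'+1}<\tau_n$ into the single strict inequality $\theta_{n'+1}<\tau^*$ --- a step that is needed precisely because passing to the limit in $\theta_{n'+1}<\tau_n$ only yields a weak inequality, and which leans on the discrete-time stabilization of $(\tau_n)_{n\geq1}$. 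You sidestep all of this by quantifying in the opposite order: you pick $n$ with $\theta_{n+1}<\tau^*$ (just the definition of the infimum $\theta^*$, no attainment needed), upgrade it to $\theta_{n+1}<\tau^*\leq\tau_n$ by monotonicity of $(\tau_n)_{n\geq1}$, and then read $\nu^*\leq\theta_{n+1}<\tau^*$ directly off \eqref{eq: nu*}, so strictness never has to be recovered in a limit. Your forward direction, via the a.s.\ bound $\theta^*\leq\nu^*$, is likewise a clean way of spelling out what the paper dismisses as immediate. What your route buys: no use of \eqref{eq: tilde nu} or of the construction beyond \eqref{eq: nu*} itself, and no attainment or stabilization arguments, so the argument would carry over verbatim to any setting (e.g.\ continuous time) in which \eqref{eq: nu*} and the monotonicity of $(\tau_n)_{n\geq1}$ hold. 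What the paper's route yields in exchange is some extra structural information (the identity $\nu_{n'+1}=\theta_{n'+1}$ and the eventual constancy of the $\tau$-sequence) that the corollary itself does not require.
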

	The proof for this corollary is given in Subsection \ref{subsec: corollary 1}.
	\subsection{Equilibrium result}\label{subsec: equilibrium}
	\begin{theorem}\label{thm: equilibrium}
		if the condition \eqref{eq: sufficient condition} is satisfied, then $(\tau^*,\theta^*)$ is a pure-strategy Nash equilibrium.
	\end{theorem}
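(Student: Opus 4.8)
The plan is to leverage Proposition \ref{lemma: Hamadene}, which already establishes the two equilibrium inequalities for $(\tau^*,\nu^*)$, and to transfer them to the genuinely $\mathbb{G}$-measurable strategy $\theta^*$. The first step is to pin down how $\theta^*$ relates to $\nu^*$. From $\theta^*=\inf_{n\geq2}\theta_n$ and the representation \eqref{eq: nu*}, I would first argue that $\theta^*\leq\nu^*$ holds $P$-a.s., since every term $\theta_{n+1}$ entering the infimum in \eqref{eq: nu*} dominates $\theta^*$ and $\theta^*\leq T$. Conversely, on $\{\theta^*<\tau^*\}$ I would use that in the finite-horizon discrete setting the infimum defining $\theta^*$ is attained, say $\theta_{m+1}=\theta^*$ for some $m$; since $(\tau_n)$ is nonincreasing, $\theta_{m+1}=\theta^*<\tau^*\leq\tau_m$, so $\theta_{m+1}$ is an admissible term in \eqref{eq: nu*}, whence $\nu^*\leq\theta^*$. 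Combining the two bounds gives $\theta^*=\nu^*$ on $\{\theta^*<\tau^*\}$.

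The second step is to show the two candidate equilibria have identical payoffs, i.e.\ $J_1(\tau^*,\theta^*)=J_1(\tau^*,\nu^*)$ and $J_2(\tau^*,\theta^*)=J_2(\tau^*,\nu^*)$. Invoking Corollary \ref{cor: equivalence} to identify $\{\theta^*<\tau^*\}=\{\nu^*<\tau^*\}$ and the complementary events $\{\tau^*\leq\theta^*\}=\{\tau^*\leq\nu^*\}$, and using $\theta^*=\nu^*$ on $\{\theta^*<\tau^*\}$, the integrands defining $J_1$ and $J_2$ coincide pointwise: on the event where Player 1 stops first the payoffs read $X^1_{\tau^*}$ and $Y^2_{\tau^*}$ in both cases, while on the event where Player 2 stops first the relevant values $Y^1_{\theta^*},X^2_{\theta^*}$ equal $Y^1_{\nu^*},X^2_{\nu^*}$. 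The inequality for Player 2 is then immediate: for every $\nu\in\mathcal{T}(\mathbb{G})$ the third assertion of Proposition \ref{lemma: Hamadene} gives $J_2(\tau^*,\nu)\leq J_2(\tau^*,\nu^*)=J_2(\tau^*,\theta^*)$, and since $\theta^*\in\mathcal{T}(\mathbb{G})$ this is exactly the best-response inequality for the uninformed player.

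The remaining, and genuinely delicate, step is Player 1's inequality $J_1(\tau,\theta^*)\leq J_1(\tau^*,\theta^*)$ for every $\tau\in\mathcal{T}(\mathbb{F})$; this is the only place where the standing assumption \eqref{eq: sufficient condition} enters. The idea is not to compare $J_1(\tau,\theta^*)$ with $J_1(\tau,\nu^*)$ directly (their difference has no definite sign), but to route the comparison through the preemption time $\tau\wedge\theta^*$, which lies in $\mathcal{T}(\mathbb{F})$ because $\theta^*$ is a $\mathbb{G}$-stopping time and $\mathcal{G}_t\subseteq\mathcal{F}_t$. Since $\tau\wedge\theta^*\leq\theta^*\leq\nu^*$, Player 1 using $\tau\wedge\theta^*$ against $\nu^*$ never loses the race, so $J_1(\tau\wedge\theta^*,\nu^*)=E[X^1_{\tau\wedge\theta^*}]$. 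Comparing integrands against $J_1(\tau,\theta^*)=E[X^1_\tau\textbf{1}_{\{\tau\leq\theta^*\}}+Y^1_{\theta^*}\textbf{1}_{\{\tau>\theta^*\}}]$, the two agree on $\{\tau\leq\theta^*\}$, while on $\{\tau>\theta^*\}$ we have $X^1_{\theta^*}\geq Y^1_{\theta^*}$ precisely by \eqref{eq: sufficient condition}; hence $J_1(\tau,\theta^*)\leq J_1(\tau\wedge\theta^*,\nu^*)$. Finally, the second assertion of Proposition \ref{lemma: Hamadene} applied to $\tau\wedge\theta^*$ yields $J_1(\tau\wedge\theta^*,\nu^*)\leq J_1(\tau^*,\nu^*)=J_1(\tau^*,\theta^*)$, closing the chain. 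I expect the main obstacle to be the first step---extracting $\theta^*=\nu^*$ on $\{\theta^*<\tau^*\}$ cleanly from \eqref{eq: nu*} via attainment of the infimum and monotonicity of $(\tau_n)$---together with recognizing that the correct intermediary for Player 1 is the preemption time $\tau\wedge\theta^*$, which is exactly what lets assumption \eqref{eq: sufficient condition} do its work.
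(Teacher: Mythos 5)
Your proof is correct, and it takes a genuinely different route from the paper's. The paper first reduces, via Lemma \ref{lemma: auxilary} and the perturbed game $\Gamma_\epsilon$, to the strict case $X^1_t>Y^1_t$; under strictness it shows $\tau^*\leq\nu^*$ by contradiction (otherwise $\tau^*\wedge\nu^*$ would be a strictly better reply to $\nu^*$, contradicting parts 1 and 2 of Proposition \ref{lemma: Hamadene}), deduces the ordering $\tau^*\leq\theta^*\leq\nu^*$ from Corollary \ref{cor: equivalence} and \eqref{eq: nu*}, and then obtains both best-response inequalities by contradiction arguments exploiting this ordering (its Player-1 step routes through $\tau_0\wedge\theta^*$, just as yours does). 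You never prove, and never need, $\tau^*\leq\nu^*$: your substitute is the pointwise identity $\theta^*=\nu^*$ on $\{\theta^*<\tau^*\}$, extracted from \eqref{eq: nu*} via attainment of $\inf_{n\geq2}\theta_n$ in the discrete finite-horizon setting and $\tau_m\geq\tau^*$; together with $\theta^*\leq\nu^*$ this yields $J_i(\tau^*,\theta^*)=J_i(\tau^*,\nu^*)$ for $i=1,2$ with no assumption on the payoffs at all, and \eqref{eq: sufficient condition} then enters exactly once, in the direct chain $J_1(\tau,\theta^*)\leq J_1(\tau\wedge\theta^*,\nu^*)\leq J_1(\tau^*,\nu^*)=J_1(\tau^*,\theta^*)$. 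What your route buys: the weak inequality suffices as it stands, so Lemma \ref{lemma: auxilary} and the $\epsilon$-perturbation are not needed, and you prove the literal statement about $\Gamma$'s own pair $(\tau^*,\theta^*)$ --- whereas the paper's reduction, read strictly, produces an equilibrium pair obtained by running the recursion on $\Gamma_\epsilon$, which need not coincide with the pair constructed in $\Gamma$ since the perturbation changes the recursion's inputs. What the paper's route buys in exchange is the structural conclusion $\tau^*\leq\theta^*\leq\nu^*$ (in the strict case, Player 1 stops first in equilibrium), which your argument does not deliver and which can indeed fail under mere weak inequality (e.g.\ when $X^1\equiv Y^1$); your treatment of the event $\{\theta^*<\tau^*\}$ via $\theta^*=\nu^*$ is precisely what replaces it.
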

	The following Lemma \ref{lemma: auxilary} states that it is enough to prove Theorem \ref{thm: equilibrium} under the assumption that $X_t^1>Y_t^1$, $P$-a.s. for every $0\leq t\leq T$. The proof for this Lemma appears in Subsection \ref{subsec: lemma 1}.
	
	\begin{lemma}\label{lemma: auxilary}
		Assume that the condition \eqref{eq: sufficient condition} is satisfied and let $\epsilon>0$. In addition, define a two-player stopping game $\Gamma_\epsilon$ which is identical to $\Gamma$ except that the expected utility of Player 1 is given by
		
		\begin{equation}
		J_{1,\epsilon}(\tau,\nu)\equiv E\left[\left(X^1_\tau+\epsilon\right)\textbf{1}_{\left\{\tau\leq\nu\right\}}+Y^1_\nu\textbf{1}_{\left\{\tau>\nu\right\}}\right], \ \ \forall\tau\in\mathcal{T}(\mathbb{F})\ , \ \nu\in\mathcal{T}(\mathbb{G})\,.
		\end{equation}
		If a pair $(\tau_0,\nu_0)\in\mathcal{T}(\mathbb{F})\times\mathcal{T}(\mathbb{G})$ is a pure-strategy Nash equilibrium in $\Gamma_\epsilon$, then it is also a pure-strategy Nash equilibrium in $\Gamma$.  
	\end{lemma}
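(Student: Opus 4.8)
The plan is to exploit two facts: first, that the payoff $J_2$ of Player 2 is \emph{identical} in $\Gamma$ and $\Gamma_\epsilon$, so that Player 2's equilibrium inequality transfers for free; and second, that the perturbation enters Player 1's payoff only through an additive term, since a direct expansion of the indicators gives
\begin{equation*}
J_{1,\epsilon}(\tau,\nu) = J_1(\tau,\nu) + \epsilon\, P(\tau \le \nu), \qquad \forall\, \tau\in\mathcal{T}(\mathbb{F}),\ \nu\in\mathcal{T}(\mathbb{G}).
\end{equation*}
Consequently the whole content of the lemma reduces to verifying that $\tau_0$ remains a best response to $\nu_0$ in $\Gamma$, i.e.\ that $J_1(\tau,\nu_0) \le J_1(\tau_0,\nu_0)$ for every $\tau\in\mathcal{T}(\mathbb{F})$, using only the assumed $\Gamma_\epsilon$-optimality of $(\tau_0,\nu_0)$ together with the hypothesis \eqref{eq: sufficient condition}.

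The key device will be, for an arbitrary fixed $\tau\in\mathcal{T}(\mathbb{F})$, the truncated stopping time $\tau\wedge\nu_0$, which is again an $\mathbb{F}$-stopping time because $\mathcal{G}_t\subseteq\mathcal{F}_t$ forces $\mathcal{T}(\mathbb{G})\subseteq\mathcal{T}(\mathbb{F})$. I would compare the two payoffs eventwise: on $\{\tau\le\nu_0\}$ one has $\tau\wedge\nu_0=\tau$ and both expressions equal $X^1_\tau$, whereas on $\{\tau>\nu_0\}$ one has $\tau\wedge\nu_0=\nu_0\le\nu_0$, so $\tau\wedge\nu_0$ earns $X^1_{\nu_0}$ while $\tau$ earns $Y^1_{\nu_0}$. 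This yields
\begin{equation*}
J_1(\tau\wedge\nu_0,\nu_0) - J_1(\tau,\nu_0) = E\!\left[\left(X^1_{\nu_0}-Y^1_{\nu_0}\right)\mathbf{1}_{\{\tau>\nu_0\}}\right] \ge 0,
\end{equation*}
where the inequality is precisely where the hypothesis $X^1_t\ge Y^1_t$ of \eqref{eq: sufficient condition} is invoked.

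It then remains to dominate $J_1(\tau\wedge\nu_0,\nu_0)$ by $J_1(\tau_0,\nu_0)$. Since $\tau\wedge\nu_0\le\nu_0$ identically, we have $P(\tau\wedge\nu_0\le\nu_0)=1$, so applying the $\Gamma_\epsilon$-optimality of $\tau_0$ against $\nu_0$ to the deviation $\tau\wedge\nu_0$ and rewriting via the additive identity gives
\begin{equation*}
J_1(\tau\wedge\nu_0,\nu_0) + \epsilon \le J_1(\tau_0,\nu_0) + \epsilon\, P(\tau_0\le\nu_0) \le J_1(\tau_0,\nu_0) + \epsilon,
\end{equation*}
whence $J_1(\tau\wedge\nu_0,\nu_0)\le J_1(\tau_0,\nu_0)$. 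Chaining this with the previous inequality produces $J_1(\tau,\nu_0)\le J_1(\tau_0,\nu_0)$ for every $\tau$, which, combined with the cost-free transfer of Player 2's condition, finishes the argument. The one delicate point — and the reason the $\epsilon$-perturbation is introduced at all — is that the term $\epsilon P(\tau\le\nu_0)$ cannot simply be dropped; the substitution $\tau\wedge\nu_0$ is exactly what simultaneously saturates the stopping probability at $1$ (neutralizing the $\epsilon$-term against the unknown $P(\tau_0\le\nu_0)$) and raises Player 1's payoff through $X^1\ge Y^1$, so recognizing this single truncation is the crux.
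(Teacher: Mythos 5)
Your proof is correct, and it uses the same key device as the paper: truncating an arbitrary deviation $\tau$ to $\tau\wedge\nu_0$, noting that Player 1's priority together with \eqref{eq: sufficient condition} makes truncation weakly profitable, and exploiting the fact that a stopping time bounded by $\nu_0$ collects the full $\epsilon$-bonus. The organization, however, is genuinely cleaner than the paper's. The paper argues by contradiction in two stages: it first uses the strict inequality $X^1_t+\epsilon>Y^1_t$ to establish that any $\Gamma_\epsilon$-equilibrium satisfies $\tau_0\leq\nu_0$, $P$-a.s. (so that $J_{1,\epsilon}(\tau_0,\nu_0)=J_1(\tau_0,\nu_0)+\epsilon$ holds exactly), and only then compares $\tau_0$ against the truncation of a hypothetical better response. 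Your argument bypasses that preliminary step entirely: instead of pinning down $P(\tau_0\leq\nu_0)=1$, you simply bound $\epsilon\,P(\tau_0\leq\nu_0)\leq\epsilon$ in the chain
\begin{equation*}
J_1(\tau\wedge\nu_0,\nu_0)+\epsilon=J_{1,\epsilon}(\tau\wedge\nu_0,\nu_0)\leq J_{1,\epsilon}(\tau_0,\nu_0)=J_1(\tau_0,\nu_0)+\epsilon\,P(\tau_0\leq\nu_0)\leq J_1(\tau_0,\nu_0)+\epsilon\,,
\end{equation*}
which gives the best-response property of $\tau_0$ in $\Gamma$ directly, with no contradiction and no use of the strictness of $X^1_t+\epsilon>Y^1_t$ beyond what \eqref{eq: sufficient condition} already provides. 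What the paper's route buys is the explicit structural fact $\tau_0\leq\nu_0$ a.s., which mirrors the reasoning reused in the proof of Theorem \ref{thm: equilibrium}; what your route buys is economy — one inequality chain, valid verbatim for every deviation $\tau$, and a cleaner isolation of where the $\epsilon$-perturbation actually matters.
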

	\subsubsection*{Proof of Theorem \ref{thm: equilibrium}} 
	Assume by contradiction that $P(\tau^*>\nu^*)>0$. Since $X_t^1>Y_t^1$ for every $t\in[0,T]$, then $J_1(\tau^*,\nu^*)<J_1(\nu^*\wedge\tau^*,\nu^*)$ and hence the second part of Proposition \ref{lemma: Hamadene} implies that $\nu^*\wedge\tau^*\notin\mathcal{T}(\mathbb{F})$. This is a contradiction with the first part of Proposition \ref{lemma: Hamadene} and hence $\tau^*\leq\nu^*$, $P$-a.s. As a result, Corollary \ref{cor: equivalence} and \eqref{eq: nu*} imply that $\tau^*\leq\theta^*\leq\nu^*$, $P$-a.s. and hence $J_2(\tau^*,\nu^*)=J_2(\tau^*,\theta^*)$. Thus, the third part of Proposition \ref{lemma: Hamadene} yields that  $\theta^*$ is a best response of Player 2 to $\tau^*$. 
	
	Assume by contradiction that there exists $\tau_0\in\mathcal{T}(\mathbb{F})$ such that $J_1(\tau_0,\theta^*)>J_1(\tau^*,\theta^*)$. Denote $\tau'\equiv\tau_0\wedge\theta^*$, and note that $\tau'\in\mathcal{T}(\mathbb{F})$. Since $X_t^1> Y_t^1$ for every $0\leq t\leq T$, deduce that
	\begin{equation}
	J_1(\tau',\theta^*)\geq J_1(\tau_0,\theta^*)>J_1(\tau^*,\theta^*)\,.
	\end{equation} 
	Now, $\tau'\leq\theta^*\leq\nu^*$, $P$-a.s., and hence it does not matter whether Player 2 picks either $\theta^*$ or $\nu^*$, when picking $\tau'$, Player 1 is the one who is responsible for stopping, i.e., $J_1(\tau',\theta^*)=J_1(\tau',\nu^*)$. On the other hand, we have already proved that $\tau^*\leq\theta^*\leq\nu^*$, $P$-a.s. and hence $J_1(\tau^*,\theta^*)=J_1(\tau^*,\nu^*)$. This leads to a contradiction  because $\tau^*$ is the best response of Player 1 to $\nu^*$.  $\blacksquare$
	
\section{Proofs of Theorem \ref{theorem: conditional expectation} and some auxiliary results }\label{sec: proof}

\subsection{Proof of Theorem \ref{theorem: conditional expectation}}\label{subsec: theorem 1}
Let $\tau\in\mathcal{T}(\mathbb{F})$ and $0\leq t\leq T$. 
The following lemma includes a characterization of the structure of a general event in $\mathcal{G}_t^\tau$. Intuitively speaking, it states that an event belongs to $\mathcal{G}_t^\tau$ if and only if conditioning on the available information about $\tau$ yields an event which belongs to $\mathcal{G}_t$.     
\begin{lemma}\label{lemma: G_t^tau}
	Let $\tau\in\mathcal{T}(\mathbb{F})$ and $0\leq t\leq T$. $A\in\mathcal{G}_t^\tau$ iff there exist $A^0\in\mathcal{G}_t$ and $\left\{A_s;0\leq s\leq t\right\}\subseteq\mathcal{G}_t$ for which 
	
	\begin{equation}\label{eq: event decomposition}
	A=\left(\{\tau>t\}\cap A^0\right)\cup\left[\cup_{s=0}^t\left(\{\tau=s\}\cap A_s\right)\right]\,.
	\end{equation}  
\end{lemma}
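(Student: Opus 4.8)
The plan is to introduce the collection $\mathcal{C}$ of all events $A$ that admit a decomposition of the form \eqref{eq: event decomposition} with $A^0\in\mathcal{G}_t$ and $A_s\in\mathcal{G}_t$ for $0\leq s\leq t$, and then prove $\mathcal{C}=\mathcal{G}_t^\tau$ by two inclusions. The structural fact driving the whole argument is that, since time is discrete and $\tau$ takes values in $\{0,1,\dots,T\}$, the events $\{\tau=0\},\dots,\{\tau=t\},\{\tau>t\}$ form a \emph{finite measurable partition} of $\Omega$, and each of them lies in $\sigma\{\mathbf{1}_{\{\tau\leq s\}};0\leq s\leq t\}\subseteq\mathcal{G}_t^\tau$. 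A decomposition as in \eqref{eq: event decomposition} therefore amounts to prescribing, on each block of this partition, a set drawn from $\mathcal{G}_t$.

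The easy inclusion $\mathcal{C}\subseteq\mathcal{G}_t^\tau$ (which is the ``if'' direction) is immediate: each block $\{\tau=s\}$ and $\{\tau>t\}$ belongs to $\mathcal{G}_t^\tau$, each constituent $A^0$ and $A_s$ belongs to $\mathcal{G}_t\subseteq\mathcal{G}_t^\tau$, and $\mathcal{G}_t^\tau$ is closed under the finite intersections and the finite union appearing in \eqref{eq: event decomposition}. For the converse inclusion I would show that $\mathcal{C}$ is a $\sigma$-algebra containing the generators of $\mathcal{G}_t^\tau$; minimality of the generated $\sigma$-algebra then forces $\mathcal{G}_t^\tau\subseteq\mathcal{C}$. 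That $\mathcal{C}$ contains the generators is a matter of choosing the constituent sets appropriately: taking $A^0=A_s=G$ for a given $G\in\mathcal{G}_t$ recovers $G$ itself, since the blocks cover $\Omega$, so $\mathcal{G}_t\subseteq\mathcal{C}$; and taking $A^0=\emptyset$, $A_s=\Omega$ for $s\leq s_0$, and $A_s=\emptyset$ for $s_0<s\leq t$ recovers $\{\tau\leq s_0\}=\bigcup_{s=0}^{s_0}\{\tau=s\}$, so every generator $\{\tau\leq s_0\}$ with $0\leq s_0\leq t$ lies in $\mathcal{C}$.

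The main work, which I expect to be error-prone rather than conceptually deep, is verifying the $\sigma$-algebra axioms for $\mathcal{C}$, and here the partition structure does all the work. Because $\{\tau=0\},\dots,\{\tau=t\},\{\tau>t\}$ are pairwise disjoint and exhaust $\Omega$, the operations act blockwise: the complement of a set in $\mathcal{C}$ is obtained by replacing each constituent $A^0,A_s$ by its $\mathcal{G}_t$-complement, and a countable union of sets in $\mathcal{C}$ is obtained by taking, on each block, the corresponding countable union of the $\mathcal{G}_t$-constituents. Since $\mathcal{G}_t$ is itself a $\sigma$-algebra, these operations keep us inside $\mathcal{C}$, and $\Omega\in\mathcal{C}$ by taking every constituent equal to $\Omega$. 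The only care needed is to track that the disjointness of the blocks is what lets complementation and union commute with the block decomposition; once this is established, combining both inclusions gives $\mathcal{C}=\mathcal{G}_t^\tau$, which is precisely the assertion of the lemma.
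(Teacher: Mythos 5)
Your proof is correct, but it closes the hard direction by a genuinely different mechanism than the paper. Both arguments share the easy inclusion and the same structural fact: since $\{\tau=0\},\dots,\{\tau=t\},\{\tau>t\}$ form a finite measurable partition of $\Omega$, a decomposition as in \eqref{eq: event decomposition} amounts to prescribing a $\mathcal{G}_t$-set on each block. The paper, however, only verifies that the collection $\mathds{D}$ of decomposable sets is a Dynkin system (it contains $\Omega$ and is closed under differences of nested sets and under increasing unions), and then invokes the $\pi$--$\lambda$ theorem against the $\pi$-system $\mathcal{P}=\left\{B\cap\{\tau\leq s\};B\in\mathcal{G}_t,0\leq s\leq t\right\}\cup\mathcal{G}_t$ to conclude $\mathcal{G}_t^\tau=\sigma\{\mathcal{P}\}\subseteq\mathds{D}$. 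You instead show that the collection of decomposable sets is itself a $\sigma$-algebra — complementation and countable unions act blockwise, so closure follows from $\mathcal{G}_t$ being a $\sigma$-algebra — and that it contains the generators $\mathcal{G}_t$ and $\{\tau\leq s_0\}$, $0\leq s_0\leq t$, so minimality of the generated $\sigma$-algebra finishes the argument with no appeal to $\pi$--$\lambda$ machinery. Your route is more elementary and arguably cleaner: the blockwise identity $\Omega\setminus A=\left(\{\tau>t\}\cap\left(\Omega\setminus A^0\right)\right)\cup\left[\cup_{s=0}^t\left(\{\tau=s\}\cap\left(\Omega\setminus A_s\right)\right)\right]$, which does require pairwise disjointness and exhaustiveness of the blocks (for unions, distributivity alone suffices), is a computation of exactly the same kind as the one the paper performs anyway for set differences inside its Dynkin-system check, so nothing is saved by the weaker closure properties; the paper's approach is simply the standard template, and yours exploits the discreteness of time slightly more directly. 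You correctly identify disjointness as the one point requiring care, and your choices of constituents for the generators ($A^0=A_s=G$ to recover $G\in\mathcal{G}_t$; $A^0=\emptyset$, $A_s=\Omega$ for $s\leq s_0$, $A_s=\emptyset$ otherwise to recover $\{\tau\leq s_0\}$) are exactly right.
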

\begin{proof}
	If $A$ has the structure of \eqref{eq: event decomposition}, then immediately deduce that $A\in\mathcal{G}_t^\tau$ because $\{\tau>t\}\in\mathcal{G}_t^{\tau}$ and $\{\tau=s\}\in\mathcal{G}_t^\tau$ for every $s\in[0,t]$.  In order to prove the other direction,  observe that every 
	\begin{equation}
	A\in \mathcal{P}\equiv\left\{B\cap\{\tau\leq s\};B\in\mathcal{G}_t,0\leq s\leq t\right\}\cup \mathcal{G}_t
	\end{equation}
	has the structure of \eqref{eq: event decomposition}. Specifically, if $A=B\cap\{\tau\leq s\}$ for some $B\in\mathcal{G}_t$ and $s\in[0,t]$, then $A_u= B$ for every $u\in[0,s]$ and $A^0=A_{s+1}=\ldots=A_t=\emptyset$. Similarly, if $A\in\mathcal{G}_t$, then it is possible to set $A^0=A_1=\ldots=A_t=A$. In addition, it may be verified that $\mathcal{P}$ is a $\pi$-system. Now, let $\mathds{D}$ be the collection of all $A\in\mathcal{F}$ which have the structure of \eqref{eq: event decomposition} and observe that:
	\begin{enumerate}
		\item An insertion of $A^0=A_0=A_1=\ldots=A_t=\Omega$ implies that $\Omega\in \mathds{D}$.
		
		\item Assume that $A_1,A_2\in\mathds{D}$ are such that
		\begin{equation}
		A_i=\left(\left\{\tau>t\right\}\cap A_i^0\right)\cup\left[\cup_{s=0}^t\left(\left\{\tau=s\right\}\cap A_{s,i}\right)\right], \ \ i=1,2
		\end{equation}
		and $A_1\subseteq A_2$. As a result, deduce that \begin{equation}
		A_2\setminus A_1=\left(\left\{\tau>t\right\}\cap \widetilde{A}^0\right)\cup\left[\cup_{s=0}^t\left(\left\{\tau=s\right\}\cap \widetilde{A}_{s}\right)\right]
		\end{equation}
		where $\widetilde{A}^0\equiv A_2^0\setminus A_1^0\in\mathcal{G}_t$ and $\widetilde{A}_s\equiv A_{s,2}\setminus A_{s,1}\in\mathcal{G}_t$ for every $s\in[0,t]$. Therefore, deduce that $A_2\setminus A_1\in\mathds{D}$. 
		
		\item Assume that $A_1\subseteq A_2\subseteq\ldots$ is a sequence of events which belong to $\mathds{D}$ such that
		\begin{equation}
		A_i=\left(\left\{\tau>t\right\}\cap A_i^0\right)\cup\left[\cup_{s=0}^t\left(\left\{\tau=s\right\}\cap A_{s,i}\right)\right]\ \ , \ \ i\geq1\,.
		\end{equation}
		Then, 
		\begin{equation}
		\cup_{i\geq1}A_i=
		\left(\left\{\tau>t\right\}\cap \widetilde{A}^0\right)\cup\left[\cup_{s=0}^t\left(\left\{\tau=s\right\}\cap \widetilde{A}_{s}\right)\right]
		\end{equation}
		such that $\widetilde{A}^0\equiv\cup_{i\geq1}A^0_i\in\mathcal{G}_t$ and $\widetilde{A}_s\equiv\cup_{i\geq1}A_{s,i}\in\mathcal{G}_t$ for every $s\in[0,t]$. Therefore, deduce that $\cup_{i\geq1}A_i\in\mathds{D}$.
	\end{enumerate}   
	This means that $\mathds{D}$ is a Dynkin system such that $\mathcal{P}\subseteq\mathds{D}$ and hence Dynkin's $\pi-\lambda$ theorem yields that $\mathcal{G}_t^\tau=\sigma\left\{\mathcal{P}\right\}\subseteq \mathds{D}$ from which the result follows. 
\end{proof}
\newline\newline
Now we are ready to provide the proof of Theorem \ref{theorem: conditional expectation}. 
\subsubsection*{Proof of Theorem \ref{theorem: conditional expectation}}
For every $t\leq u\leq T-1$, let $\nu^*_u$ be the minimal-optimal stopping time which solves
\begin{equation}
\text{ess}\sup_{\nu\in\mathcal{T}_u(\mathbb{G}^\tau)}E\left[X^2_\nu\textbf{1}_{\left\{\nu<\tau\right\}}+Y^2_{\tau}\textbf{1}_{\left\{\tau\leq\nu\right\}}\big|\mathcal{G}_u^\tau\right]\,.
\end{equation}
Thus, Lemma \ref{lemma: G_t^tau} implies that for every $t\leq u\leq T-1$, there are $A_u^0\in\mathcal{G}_u$ and $\left(A_{s,u}\right)_{s=0}^u\subseteq\mathcal{G}_u$ such that
\begin{equation}\label{eq: structure}
\{\nu^*_u=u\}=\left(A_u^0\cap\{\tau>u\}\right)\cup\left[\cup_{s=0}^u\left(\{\tau=s\}\cap A_{s,u}\right)\right]\,.
\end{equation} 
As a result, deduce that
\begin{equation}\label{eq: u=t}
	\{\tau>t\}\cap\{\nu_t^*=t\}=\{\tau>t\}\cap A^0_t\,.
\end{equation}
Now, consider the case when $u>t$ and observe that for every $0\leq l\leq s$ such that $t\leq s\leq u-1$:
\begin{align}
	&\{\tau>u\}\subseteq\{\tau\neq l\}\,,\label{eq:l inclusion}\\&\{\tau>u\}\cap\{\tau\leq s\}=\emptyset\,.\label{eq:s emptyset}
\end{align}
Therefore, if we use the notation $\overline{A}\equiv\Omega\setminus A$ for every $A\subseteq\Omega$, then an insertion of \eqref{eq: structure} and applying standard algebra of sets imply that: 
\begin{align}\label{eq: master decomposition}
&\{\tau>u\}\cap\{\nu_t^*=u\}=\{\tau>u\}\cap\{\nu^*_u=u\}\cap\left(\cap_{s=t}^{u-1}\{\nu_s^*>s\}\right)\\&=\{\tau>u\}\cap\{\nu^*_u=u\}\cap\left(\cap_{s=t}^{u-1}\overline{\{\nu_s^*=s\}}\right)\nonumber\\&=\{\tau>u\}\cap A_u^0\cap\left[\cap_{s=t}^{u-1}\overline{\left[\left(A_s^0\cap\{\tau>s\}\right)\cup\left[\cup_{l=0}^s\left(\{\tau=l\}\cap A_{l,s}\right)\right]\right]}\right]\nonumber\\&=\{\tau>u\}\cap A_u^0\cap\left[\cap_{s=t}^{u-1}\left[\left(\overline{A_s^0}\cup\{\tau\leq s\}\right)\cap\left[\cap_{l=0}^s\left(\{\tau\neq l\}\cup \overline{A_{l,s}}\right)\right]\right]\right]\nonumber\\&=\{\tau>u\}\cap A_u^0\nonumber\\&\cap\left[\cap_{s=t}^{u-1}\left[\left[\overline{A_s^0}\cap\left[\cap_{l=0}^s\left(\{\tau\neq l\}\cup \overline{A_{l,s}}\right)\right]\right]\cup\left[\{\tau\leq s\}\cap\left[\cap_{l=0}^s\left(\{\tau\neq l\}\cup \overline{A_{l,s}}\right)\right]\right]\right]\right]\,.\nonumber
\end{align}
In particular, \eqref{eq:s emptyset} implies that for every $t\leq s\leq u-1$, 
\begin{equation}\label{eq: help1}
	\{\tau>u\}\cap\left[\{\tau\leq s\}\cap\left[\cap_{l=0}^s\left(\{\tau\neq l\}\cup \overline{A_{l,s}}\right)\right]\right]\subseteq\{\tau>u\}\cap\{\tau\leq s\}=\emptyset\,.
\end{equation}
In addition, \eqref{eq:l inclusion} implies that for every $0\leq l\leq s$ such that $t\leq s\leq u-1$, 
\begin{equation}
\{\tau>u\}\cap\left(\{\tau\neq l\}\cup\overline{A_{l,s}}\right)=\{\tau>u\}\,,	
\end{equation}
and hence 
\begin{equation}\label{eq: help2}
	\{\tau>u\}\cap\left[\overline{A_s^0}\cap\left[\cap_{l=0}^s\left(\{\tau\neq l\}\cup \overline{A_{l,s}}\right)\right]\right]=\{\tau>u\}\cap \overline{A_s^0}\,.
\end{equation}
Combining \eqref{eq: master decomposition},\eqref{eq: help1} and \eqref{eq: help2} all together yields that
\begin{equation}\label{eq: u>t}
	\{\tau>u\}\cap\{v_t^*=u\}=\{\tau>u\}\cap A^0_u\cap\left(\cap_{s=t}^{u-1}\overline{A^0_s}\right)\,.
\end{equation}
Therefore, \eqref{eq: u=t} and \eqref{eq: u>t} imply that for every $t\leq u\leq T-1$, we get
\begin{equation}
	\{\tau>u\}\cap\{v_t^*=u\}=\{\tau>u\}\cap K_u
\end{equation}
where
\begin{equation}
K_u\equiv A_u^0\setminus\cup_{s=t}^{u-1}A_s^0\,.
\end{equation}
In particular, observe that
\begin{equation}
K_u\in\mathcal{G}_u\ \ , \ \ \forall t\leq u\leq T-1
\end{equation} 
and 
\begin{equation}
K_{u_1}\cap K_{u_2}=\emptyset\ \ , \ \ \forall t\leq u_1<u_2\leq T-1\,.
\end{equation}
Therefore, it is possible to define a random variable $\hat{\nu}_t:\Omega\rightarrow[t,T-1]$ such that 
\begin{equation}
\{\hat{\nu}_t=u\}=K_u\ \ , \ \ \forall t\leq u\leq T-1
\end{equation}
and $\{\hat{\nu}_t=T\}=\overline{\{t\leq\hat{\nu}_t\leq T-1\}}$.
As a result, by construction we get that  $\hat{\nu}_t\in\mathcal{T}_t(\mathbb{G})$ and 		
\begin{equation}\label{eq: event equality}
\{\tau>u\}\cap\{\nu^*_t=u\}=\{\tau>u\}\cap\{\hat{\nu}_t=u\}\ \ , \ \ \forall t\leq u\leq T-1\,.
\end{equation}
In addition, the identity
\begin{align}
X^2_\nu\textbf{1}_{\left\{\nu<\tau\right\}}+Y^2_{\tau}\textbf{1}_{\left\{\tau\leq\nu\right\}}=\sum_{u=t}^{T-1}X^2_u\textbf{1}_{\{\tau>\nu=u\}}+Y^2_\tau\left(1-\sum_{u=t}^{T-1}\textbf{1}_{\{\tau>\nu=u\}}\right)
\end{align}
holds for every $\nu\in\mathcal{T}_t(\mathbb{G}^\tau)$ (especially, it holds for both $\nu^*_t$ and $\hat{\nu}_t$) and hence \eqref{eq: event equality} implies the first part of \eqref{eq: ess-sup}. 

For the proof of the second part of \eqref{eq: ess-sup}, consider some arbitrary $\nu\in\mathcal{T}_t\left(\mathbb{G}^\tau\right)$ and assume that $P(\tau>t)>0$.	Let $A\in\mathcal{G}_t$ and observe that:
\begin{align}\label{eq: conditional expectation11}
& E\left\{\textbf{1}_A\textbf{1}_{\{\tau>t\}}E_{P|\{\tau>t\}}\left[X^2_\nu\textbf{1}_{\{\nu<\tau\}}+Y^2_\tau\textbf{1}_{\{\tau\leq\nu\}}\big|\mathcal{G}_t\right]\right\}\\&=E\left\{\textbf{1}_{\{\tau>t\}}E_{P|\{\tau>t\}}\left[\left(X^2_\nu\textbf{1}_{\{\nu<\tau\}}+Y^2_\tau\textbf{1}_{\{\tau\leq\nu\}}\right)\textbf{1}_A\big|\mathcal{G}_t\right]\right\}\nonumber\\&=P\left(\tau>t\right)E_{P|\{\tau>t\}}\left\{E_{P|\{\tau>t\}}\left[\left(X^2_\nu\textbf{1}_{\{\nu<\tau\}}+Y^2_\tau\textbf{1}_{\{\tau\leq\nu\}}\right)\textbf{1}_A\big|\mathcal{G}_t\right]\right\}\nonumber\\&=P\left(\tau>t\right)E_{P|\{\tau>t\}}\left\{\left[X^2_\nu\textbf{1}_{\{\nu<\tau\}}+Y^2_\tau\textbf{1}_{\{\tau\leq\nu\}}\right]\textbf{1}_A\right\}\nonumber\\&=E\left\{\left[X^2_\nu\textbf{1}_{\{\nu<\tau\}}+Y^2_\tau\textbf{1}_{\{\tau\leq\nu\}}\right]\textbf{1}_A\textbf{1}_{\{\tau>t\}}\right\}\ \ , \ \ P\text{-a.s.}\nonumber
\end{align}
In addition,  $\nu\geq t$, $P$-a.s. and hence $P$-a.s., $\tau\leq t$ implies that $\tau\leq\nu$. This and \eqref{eq: conditional expectation11} yield that
\begin{align}\label{eq: conditional expectation}
&E\left\{\textbf{1}_A\left[Y^2_\tau\textbf{1}_{\{\tau\leq t\}}+\textbf{1}_{\{\tau>t\}}E_{P|\{\tau>t\}}\left(X^2_\nu\textbf{1}_{\{\nu<\tau\}}+Y^2_\tau\textbf{1}_{\{\tau\leq\nu\}}\big|\mathcal{G}_t\right)\right]\right\}\\&=E\left\{\textbf{1}_A\left[Y^2_\tau\textbf{1}_{\{\tau\leq t\}}\textbf{1}_{\{\tau\leq\nu\}}+\textbf{1}_{\{\tau>t\}}E_{P|\{\tau>t\}}\left(X^2_\nu\textbf{1}_{\{\nu<\tau\}}+Y^2_\tau\textbf{1}_{\{\tau\leq\nu\}}\big|\mathcal{G}_t\right)\right]\right\}\nonumber\\&=E\left\{\left[X^2_\nu\textbf{1}_{\{\nu<\tau\}}+Y^2_\tau\textbf{1}_{\{\tau\leq\nu\}}\right]\textbf{1}_A\right\}\,.\nonumber
\end{align} 
Note that in the last equality is valid because of the fact that $\nu\geq t$, $P$-a.s. and hence $\textbf{1}_{\{\nu<\tau\}}\textbf{1}_{\{\tau>t\}}=\textbf{1}_{\{\nu<\tau\}}$, $P$-a.s. Furthermore, notice that \eqref{eq: conditional expectation} holds for every 
\begin{equation}
A\in \mathcal{P}\equiv\left\{B\cap\{\tau\leq s\};B\in\mathcal{G}_t,0\leq s\leq t\right\}\cup \mathcal{G}_t\,.
\end{equation}
In addition, let $\mathds{D}$ be the set of all $A\in\mathcal{F}$ for which \eqref{eq: conditional expectation} holds. Then, observe that:

\begin{enumerate}
	\item $\Omega\in\mathcal{G}_t$ and hence \eqref{eq: conditional expectation} implies that $\Omega\in \mathds{D}$. 
	
	\item Assume that $A\in \mathds{D}$. Since $\textbf{1}_{\Omega\setminus A}=\textbf{1}_{\Omega}-\textbf{1}_A$ and $\Omega,A\in\mathds{D}$, then the linearity of conditional expectation yields that $\Omega\setminus A\in \mathds{D}$. 
	
	\item Assume that $\left(A_n\right)_{n=1}^\infty\subset \mathds{D}$ such that $A_i\cap A_j=\emptyset$ for every $i\neq j$. Then, $\textbf{1}_{\cup_{n=1}^\infty A_n}=\sum_{n=1}^\infty\textbf{1}_{A_n}$ and hence the linearity of conditional expectation yields that $\cup_{n=1}^\infty A_n\in \mathds{D}$. 
\end{enumerate}
Thus, $\mathds{D}$ is a Dynkin system. Since $\mathcal{P}$ is a $\pi$-system which is a subset of $\mathds{D}$, Dynkin's $\pi-\lambda$ theorem implies that $\mathcal{G}^{\tau}_t=\sigma\{\mathcal{P}\}\subseteq \mathds{D}$. As a result,  the definition of conditional expectation  implies that  
\begin{align}\label{eq: conditional expectations}
&E\left[X^2_\nu\textbf{1}_{\{\nu<\tau\}}+Y^2_\tau\textbf{1}_{\{\tau\leq\nu\}}\big|\mathcal{G}_t^\tau\right]=Y^2_\tau\textbf{1}_{\{\tau\leq t\}}\\&+\textbf{1}_{\{\tau>t\}}E_{P|\{\tau>t\}}\left[X^2_\nu\textbf{1}_{\{\nu<\tau\}}+Y^2_\tau\textbf{1}_{\{\tau\leq\nu\}}\big|\mathcal{G}_t\right]\ \ , \ \ P\text{-a.s.}\nonumber
\end{align}
Now, recall that it has already been proved  that $\hat{\nu}_t$ solves the essential supremum problem of the LHS of \eqref{eq: conditional expectations} over the domain $\mathcal{T}_t(\mathbb{G}^\tau)$. Thus, $P$-a.s. $\hat{\nu}_t$ solves the essential supremum problem of the RHS of \eqref{eq: conditional expectations} on the same domain, i.e., 
\begin{align}
&E_{P|\{\tau>t\}}\left[X^2_{\hat{\nu}_t}\textbf{1}_{\{\hat{\nu}_t<\tau\}}+Y^2_\tau\textbf{1}_{\{\tau\leq\hat{\nu}_t\}}\big|\mathcal{G}_t\right]\\&=\text{\normalfont ess}\sup_{\nu\in\mathcal{T}_t(\mathbb{G}^\tau)}E_{P|\{\tau>t\}}\left[X^2_\nu\textbf{1}_{\{\nu<\tau\}}+Y^2_\tau\textbf{1}_{\{\tau\leq\nu\}}\big|\mathcal{G}_t\right]\ \ , \ \ P\text{-a.s.}\nonumber
\end{align}
Consequently, since $\hat{\nu}_t\in\mathcal{T}_t(\mathbb{G})\subseteq\mathcal{T}_t(\mathbb{G}^\tau)$, deduce that 
\begin{align}\label{eq: redundant information}
&E_{P|\{\tau>t\}}\left[X^2_{\hat{\nu}_t}\textbf{1}_{\{\hat{\nu}_t<\tau\}}+Y^2_\tau\textbf{1}_{\{\tau\leq\hat{\nu}_t\}}\big|\mathcal{G}_t\right]\\&=\text{\normalfont ess}\sup_{\nu\in\mathcal{T}_t(\mathbb{G})}E_{P|\{\tau>t\}}\left[X^2_\nu\textbf{1}_{\{\nu<\tau\}}+Y^2_\tau\textbf{1}_{\{\tau\leq\nu\}}\big|\mathcal{G}_t\right]\ \ , \ \ P\text{-a.s.}\nonumber
\end{align}
This, with the first equality in \eqref{eq: ess-sup}, implies that the following equation holds $P$-a.s.
\begin{align}
&\text{\normalfont ess}\sup_{\nu\in\mathcal{T}_t(\mathbb{G}^\tau)}E\left[X^2_\nu\textbf{1}_{\{\nu<\tau\}}+Y^2_\tau\textbf{1}_{\{\tau\leq\nu\}}\big|\mathcal{G}_t^\tau\right]=Y^2_\tau\textbf{1}_{\{\tau\leq t\}}\\&+\textbf{1}_{\{\tau>t\}}\text{\normalfont ess}\sup_{\nu\in\mathcal{T}_t(\mathbb{G})}E_{P|\{\tau>t\}}\left[X^2_\nu\textbf{1}_{\{\nu<\tau\}}+Y^2_\tau\textbf{1}_{\{\tau\leq\nu\}}\big|\mathcal{G}_t\right]\nonumber
\end{align}
and the result follows. $\blacksquare$

\subsection{Proof of Proposition \ref{lemma: Hamadene}}\label{subsec: proposition 1}
Proposition \ref{lemma: Hamadene} follows from the same arguments which were made by Hamad\'ene \textit{et al.} \cite{Hamadene2010} with certain modifications. Now, we elaborate on these modifications via several lemmata. 
	\begin{lemma}
		For each $n\geq1
		$, $\tau_n\in\mathcal{T}(\mathbb{F})$ and $\nu_n\in\mathcal{T}(\mathbb{G}^\tau)$. In addition, both sequences $\left(\tau_n\right)_{n\geq1}$ and $\left(\nu_n\right)_{n\geq1}$ are $P$-a.s. nonincreasing.  
	\end{lemma}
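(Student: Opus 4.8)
The plan is to prove all four assertions at once by induction on $n$, since --- as I explain below --- the measurability of the spliced stopping times is entangled with their monotonicity. The base case $n=1$ is trivial: $\tau_1=\nu_1=T$ is a deterministic constant, hence a stopping time for any filtration, and monotonicity is vacuous. Throughout I would keep at hand two facts about the enlargement. First, because $\mathcal{G}_t\subseteq\mathcal{F}_t$ and, inductively, $\tau_n\in\mathcal{T}(\mathbb{F})$, the indicators $\mathbf{1}_{\{\tau_n\le s\}}$ are $\mathcal{F}_s$-measurable, so $\mathbb{G}\subseteq\mathbb{G}^{\tau_n}\subseteq\mathbb{F}$. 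Second, by the very definition of $\mathcal{G}^{\tau_n}_t$, the time $\tau_n$ is itself a $\mathbb{G}^{\tau_n}$-stopping time. These inclusions let me move stopping times between the three filtrations as needed.

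For the measurability I would first note that $W^{1,n}$ is the Snell envelope of an $\mathbb{F}$-adapted integrable reward, so it is $\mathbb{F}$-adapted and $\{W^{1,n}_t=X^1_t\}\in\mathcal{F}_t$; in discrete time this makes $\sigma_n\equiv\inf\{t\ge0:W^{1,n}_t=X^1_t\}$ an $\mathbb{F}$-stopping time, since $\{\sigma_n\le t\}=\bigcup_{s\le t}\{W^{1,n}_s=X^1_s\}$. As $\nu_n\in\mathcal{T}(\mathbb{F})$ (it is a $\mathbb{G}^{\tau_{n-1}}$-stopping time and $\mathbb{G}^{\tau_{n-1}}\subseteq\mathbb{F}$), the truncation $\widetilde\tau_{n+1}=\sigma_n\wedge\nu_n$ is again an $\mathbb{F}$-stopping time, and the recursion simplifies to $\tau_{n+1}=\sigma_n$ on $\{\sigma_n<\nu_n\}$ and $\tau_{n+1}=\tau_n$ otherwise. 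Here is the crucial point: to conclude that such a spliced object is itself a stopping time I would invoke the standard lemma that $S\mathbf{1}_A+R\mathbf{1}_{A^c}\in\mathcal{T}(\mathbb{F})$ whenever $S,R\in\mathcal{T}(\mathbb{F})$, $A\in\mathcal{F}_S$ and $S\le R$; the hypothesis $A=\{\sigma_n<\nu_n\}\in\mathcal{F}_{\widetilde\tau_{n+1}}$ is routine, but the ordering hypothesis $\widetilde\tau_{n+1}\le\tau_n$ is precisely a monotonicity statement, so the two halves of the lemma cannot be separated. The $\nu_{n+1}$ side is the mirror image carried out inside $\mathbb{G}^{\tau_n}$, with $\rho_n\equiv\inf\{t\ge0:W^{2,n}_t=X^2_t\}$ a $\mathbb{G}^{\tau_n}$-stopping time (legitimate because $X^2,Y^2$ are $\mathbb{G}$- hence $\mathbb{G}^{\tau_n}$-adapted, and $\tau_n$ is $\mathbb{G}^{\tau_n}$-adapted).

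For the monotonicity itself I would run the comparison engine. On $\{\sigma_n\ge\nu_n\}$ the construction sets $\tau_{n+1}=\tau_n$, so the work is confined to $\{\sigma_n<\nu_n\}$, where I must show $\sigma_n\le\tau_n$. The induction hypothesis gives $\nu_n\le\nu_{n-1}$, which orders the two reward processes defining $W^{1,n}$ and $W^{1,n-1}$; by monotonicity of the Snell envelope in its reward the value processes are then ordered and their exercise regions $\{W^{1,n}=X^1\}$ nested, forcing $\sigma_n\le\sigma_{n-1}$. Feeding this through the recursion $\tau_n=\sigma_{n-1}\mathbf{1}_{\{\sigma_{n-1}<\nu_{n-1}\}}+\tau_{n-1}\mathbf{1}_{\{\sigma_{n-1}\ge\nu_{n-1}\}}$ together with the hypothesis $\tau_n\le\tau_{n-1}$ yields $\widetilde\tau_{n+1}\le\tau_n$, closing both the ordering hypothesis needed in the previous paragraph and the inequality $\tau_{n+1}\le\tau_n$; the symmetric chain, driven by $\tau_n\le\tau_{n-1}$, handles $\nu_{n+1}\le\nu_n$. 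Bounded nonincreasing sequences then converge pointwise, delivering $\tau^*$ and $\nu^*$.

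I expect the comparison step to be the genuine obstacle. Two things must be made precise there: that the rewards feeding consecutive value processes are truly ordered --- this is where the terminal payoff $X^1_T\mathbf{1}_{\{\nu_n=T\}}+Y^1_{\nu_n}\mathbf{1}_{\{\nu_n<T\}}$ and its dependence on the moving time $\nu_n$ must be handled carefully --- and that this ordering really transports through the Snell envelope to a domination of first-hitting times, rather than merely of the value processes. A subsidiary annoyance is the mismatch between the enlargements $\mathbb{G}^{\tau_{n-1}}$ and $\mathbb{G}^{\tau_n}$ when splicing $\nu_{n+1}$; I would circumvent it by verifying all measurability claims in $\mathbb{F}$ and invoking $\mathbb{G}^{\tau_n}$ only where the $\mathbb{G}$-adaptedness of Player 2's data is actually used, exactly as in the construction of Hamad\'ene \textit{et al.}
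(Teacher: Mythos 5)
Your architecture coincides with the paper's: induction on $n$, optimal-stopping theory to make the untruncated hitting times into stopping times (the paper quotes Theorem 1.2 of Peskir and Shiryaev, you rederive it from the discrete-time Snell envelope---same thing), a splicing step for passing from $\widetilde{\tau}_{n+1}$ to $\tau_{n+1}$, and monotonicity; the paper delegates the last two wholesale to the proof of Lemma 3.1 in Hamad\'ene and Zhang, and your observation that they are entangled is correct and is exactly why they must be treated together. The genuine gap is in the one step you work out yourself, the comparison. The assertion that $\nu_n\le\nu_{n-1}$ ``orders the two reward processes defining $W^{1,n}$ and $W^{1,n-1}$'' is false: on $\{\nu_n\le t<\nu_{n-1}\}$ the stage-$n$ reward equals the frozen quantity $X^1_T\mathbf{1}_{\{\nu_n=T\}}+Y^1_{\nu_n}\mathbf{1}_{\{\nu_n<T\}}$ while the stage-$(n-1)$ reward equals $X^1_t$, and nothing in the model orders $Y^1_{\nu_n}$ against $X^1_t$ for $t>\nu_n$---not even condition \eqref{eq: sufficient condition}, which compares the two processes at equal times only. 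Hence ``monotonicity of the Snell envelope in its reward'' has nothing to act on, the exercise regions need not be nested, and $\sigma_n\le\sigma_{n-1}$ does not follow; and since, by your own reduction, the splicing lemma needs precisely this ordering as a hypothesis, the hole brings down the measurability half of your induction as well.

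The correct comparison is between value processes, not rewards, and it genuinely consumes condition \eqref{eq: sufficient condition}: for $t\le\nu_n$ and $\tau\in\mathcal{T}_t(\mathbb{F})$, the stage-$n$ payoff of $\tau$ is dominated by the stage-$(n-1)$ payoff of $\tau\wedge\nu_n$, because on $\{\tau\ge\nu_n\}\cap\{\nu_n<\nu_{n-1}\}$ the former is $Y^1_{\nu_n}$ and the latter is $X^1_{\nu_n}$, the two payoffs coinciding otherwise. Taking essential suprema gives $W^{1,n}_t\le W^{1,n-1}_t$ for $t\le\nu_n$, whence (using $W^{1,n}_t\ge X^1_t$ for $t<\nu_n$) every point of $\{W^{1,n-1}=X^1\}$ before $\nu_n$ lies in $\{W^{1,n}=X^1\}$; this yields only the capped inequality $\sigma_n\wedge\nu_n\le\sigma_{n-1}\wedge\nu_n$, not $\sigma_n\le\sigma_{n-1}$, but the capped form is what closes the induction. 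Some hypothesis of this kind is unavoidable, so your plan of running the comparison on the induction hypothesis alone cannot be realized: take $T=2$, $\mathcal{F}_0$ trivial, $D\in\mathcal{G}_1$ with $P(D)=\tfrac12$, $X^1=(\tfrac12,\mathbf{1}_{D^c},0)$, $Y^1=(0,10,0)$, $X^2=(0,\mathbf{1}_D,5)$, $Y^2=(0,0,1)$; the recursion gives $\tau_2=0$ and $\nu_2=\mathbf{1}_D+2\cdot\mathbf{1}_{D^c}$, and then $\tau_3=\mathbf{1}_{D^c}$, which is neither $\le\tau_2$ nor an $\mathbb{F}$-stopping time (as $D\notin\mathcal{F}_0$). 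A secondary but real defect: the lemma asserts $\nu_{n+1}\in\mathcal{T}(\mathbb{G}^{\tau_n})$, and your proposal to ``verify all measurability claims in $\mathbb{F}$'' can only ever produce $\nu_{n+1}\in\mathcal{T}(\mathbb{F})$; to obtain the stated conclusion you must run the splicing inside the enlarged filtration itself, as the paper does, which in particular requires checking that $\nu_n$, a priori a stopping time for the previous enlargement $\mathbb{G}^{\tau_{n-1}}$, remains one for $\mathbb{G}^{\tau_n}$.
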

	
	\begin{proof}
		For $n=1$, the claim is immediate. Assume that it holds for some $n\geq1$ and notice that this assumption yields that $\nu_n\in\mathcal{T}(\mathbb{F})$. Now,
		define
		
		\begin{equation}
		U^{1,n}_t\equiv X_t^1\textbf{1}_{\{t\leq\nu_n\}}+\left(X^1_T\textbf{1}_{\{\nu_n=T\}}+Y^1_{\nu_n}\textbf{1}_{\{\nu_n<T\}}\right)\textbf{1}_{\left\{t\geq\nu_n\right\}} , \ \ \forall t\in[0,T]
		\end{equation}
		and $(U^{1,n}_t)_{t=0}^T$ satisfies the pre-conditions of Theorem 1.2 in \cite{Peshkir2006} w.r.t. the filtration $\mathbb{F}$. Thus,  deduce that $\widetilde{\tau}_{n+1}$ is the minimal-optimal  stopping time of the problem
		\begin{equation}
		\text{ess}\sup_{\tau\in\mathcal{T}(\mathbb{F})}E\left(U_\tau^{1,n}|\mathcal{F}_0\right)\,,
		\end{equation} 
		and hence $\widetilde{\tau}_{n+1}\in\mathcal{T}(\mathbb{F})$. Then, showing that $\tau_{n+1}\in\mathcal{T}(\mathbb{F})$  follows by the same arguments which appear in the proof of Lemma 3.1 in \cite{Hamadene2010}. Similarly, define 
		
		\begin{equation}
		U^{2,n}_t\equiv X_t^2\textbf{1}_{\{t<\tau_n\}}+Y_t^2\textbf{1}_{\{t\leq\tau_n\}} , \ \ \forall t\in[0,T]\,,
		\end{equation}
		and notice that the current model assumptions  yield that $(U^{2,n}_t)_{t=0}^T$  satisfies the pre-conditions of Theorem 1.2 in \cite{Peshkir2006} w.r.t. $\mathbb{G}^{\tau_n}$. Then, showing that $\widetilde{\nu}_{n+1}\in\mathcal{T}(\mathbb{G}^{\tau_n})$ and hence $\nu_{n+1}\in\mathcal{T}(\mathbb{G}^{\tau_n})$ follows by the same arguments which appear in the proof of Lemma 3.1 in Hamad\'ene \textit{et al.}  \cite{Hamadene2010}. Finally, the monotonicity of the sequences $\left(\tau_n\right)_{n\geq1}$ and $\left(\nu_n\right)_{n\geq1}$ is justified by construction and it is explained in the proof of Lemma 3.1 in Hamad\'ene \textit{et al.}  \cite{Hamadene2010}.    
	\end{proof}
	
	\begin{lemma}
		For every $n\geq2$, one has:
		\begin{enumerate}
			\item \begin{equation}
			J_1(\tau,\nu_n)\leq J_1(\tau_n,\nu_n), \ \ \forall\tau\in\mathcal{T}(\mathbb{F})\,.
			\end{equation}
			
			\item \begin{equation}
			J_2(\tau_n,\nu)\leq J_2(\tau_n,\nu_n), \ \ \forall\nu\in\mathcal{T}(\mathbb{G})\,.
			\end{equation}
		\end{enumerate}
	\end{lemma}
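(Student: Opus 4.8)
The plan is to prove the two best-response inequalities simultaneously by induction on $n$, following the verification scheme of Hamad\'ene \textit{et al.} \cite{Hamadene2010} but adapting it to the present asymmetric-information setting. The engine is the optimal-stopping characterization already invoked above: by Theorem 1.2 of \cite{Peshkir2006}, $\widetilde{\tau}_{n+1}$ is the minimal optimal stopping time for the auxiliary reward $U^{1,n}$ (so $W^{1,n}$ is its Snell envelope), and symmetrically $\widetilde{\nu}_{n+1}$ is minimal optimal for $U^{2,n}$ relative to $\mathbb{G}^{\tau_n}$. The first step I would carry out is to pin down the exact algebraic link between these auxiliary rewards and the true payoffs. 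A direct computation shows that, for every $\tau\in\mathcal{T}(\mathbb{F})$, the integrand of $J_1(\tau,\nu_n)$ and the process $U^{1,n}_\tau$ coincide off the simultaneous-stopping event $\{\tau=\nu_n<T\}$, where $J_1$ charges $X^1_{\nu_n}$ (Player 1 has priority) while $U^{1,n}$ charges $Y^1_{\nu_n}$; on $\{\nu_n=T\}$ they agree identically and equal $X^1_\tau$.

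For Part 1 I would then argue that the case-split definition of $\tau_{n+1}$ — keeping $\widetilde{\tau}_{n+1}$ only on $\{\widetilde{\tau}_{n+1}<\nu_n\}$ and freezing $\tau_{n+1}=\tau_n$ on the complementary event — is precisely the device that absorbs this boundary discrepancy, so that the $U^{1,n}$-optimality of $\widetilde{\tau}_{n+1}$ upgrades to the $J_1$-statement $J_1(\tau,\nu_n)\le J_1(\tau_{n+1},\nu_n)$ for all $\tau\in\mathcal{T}(\mathbb{F})$. The verification here is the discrete-time analogue of the passage in the proof of Lemma 3.1 of \cite{Hamadene2010} from $\widetilde{\tau}_{n+1}$ to $\tau_{n+1}$, and it reduces to checking that on $\{\widetilde{\tau}_{n+1}\ge\nu_n\}$ freezing costs nothing because there Player 2 stops first and the payoff is already determined by $\nu_n$.

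For Part 2 I would run the mirror-image argument for Player 2, with one additional reduction specific to the asymmetric information. Since the essential supremum defining $W^{2,n}$ is taken over the enlarged family $\mathcal{T}_t(\mathbb{G}^{\tau_n})$, I would first apply Theorem \ref{theorem: conditional expectation} to rewrite it in terms of the $\mathbb{G}$-adapted value $V^n$, using the identity \eqref{eq: tilde nu}, which identifies $\widetilde{\nu}_{n+1}=\theta_{n+1}\wedge\tau_n$. This is exactly what legitimizes comparing $\nu_n$ against arbitrary $\nu\in\mathcal{T}(\mathbb{G})$ rather than only against $\mathbb{G}^{\tau_n}$-stopping times, and hence what yields the claimed $J_2(\tau_n,\nu)\le J_2(\tau_n,\nu_n)$; the bookkeeping at the tie event $\{\nu=\tau_n\}$ is handled exactly as in Part 1, now with the roles of the two indicators exchanged.

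The genuine obstacle, and the step I expect to cost the most, is the one-step index lag: the recursion manufactures $\tau_{n+1}$ as an optimal response to $\nu_n$ and $\nu_{n+1}$ as an optimal response to $\tau_n$, whereas the lemma asserts optimality of the \emph{same-index} pair $(\tau_n,\nu_n)$. To close this gap I would lean on the monotonicity $\tau_{n+1}\le\tau_n$ and $\nu_{n+1}\le\nu_n$ established above, together with the observation that the auxiliary rewards stabilize below the crossing time: for $t<\nu_n\le\nu_{n-1}$ one has $U^{1,n}_t=U^{1,n-1}_t=X^1_t$, and for $t<\tau_n\le\tau_{n-1}$ the Player-2 reward reduces to $X^2_t$ in both the $\tau_{n-1}$- and $\tau_n$-problems. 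My aim would be to deduce from this that the optimal stopping rule, read as the hitting time of the stopping region before the opponent acts, is insensitive to updating the opponent from index $n-1$ to index $n$, so that $\tau_n$ remains a best response against $\nu_n$ and $\nu_n$ against $\tau_n$. Establishing this coincidence of stopping regions below the crossing time — rather than merely the coincidence of the immediate rewards, since the Snell envelopes still depend on the rewards after the crossing — is the delicate point, and it is exactly where the monotone recursive structure must substitute for the backward induction that is unavailable here.
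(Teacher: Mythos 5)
You take the same overall route as the paper, namely the Hamad\'ene--Zhang verification scheme via Snell envelopes, but your proposal stops short of proving the statement. The paper's proof is essentially a citation: it inherits the argument of Lemma 3.3 of \cite{Hamadene2010} wholesale, the single new ingredient being the observation that any deviation $\nu\in\mathcal{T}(\mathbb{G})$ satisfies $\nu\wedge\tau_n\in\mathcal{T}(\mathbb{G}^{\tau_n})$, while $J_2(\tau_n,\nu)=J_2(\tau_n,\nu\wedge\tau_n)$ (once $\tau_n\leq\nu$, Player 2's payoff is $Y^2_{\tau_n}$ no matter what $\nu$ is), so every $\mathbb{G}$-deviation is dominated by the essential supremum $W^{2,n}$ taken over the larger family $\mathcal{T}_t(\mathbb{G}^{\tau_n})$. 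Your steps for the two players reconstruct the Hamad\'ene--Zhang verification correctly as far as they go, but what they deliver are exactly the \emph{lagged} statements $J_1(\tau,\nu_n)\leq J_1(\tau_{n+1},\nu_n)$ and $J_2(\tau_n,\nu)\leq J_2(\tau_n,\nu_{n+1})$. The passage to the same-index claim of the lemma --- which you yourself single out as ``the genuine obstacle'' --- is never carried out: you announce an aim (coincidence of the stopping regions below the crossing time) without establishing it, and as you concede, it cannot follow from coincidence of the immediate rewards alone, since the Snell envelopes in the $n$-th and $(n-1)$-th problems differ through the post-crossing payoffs. As written, the proposal is therefore a proof of the lagged best-response property plus a program for the remaining step; it is not a proof of the lemma as stated. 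The paper disposes of this by deferring entirely to the proof of Lemma 3.3 in \cite{Hamadene2010}; a self-contained write-up such as yours must actually close that step, or else prove the lagged version and observe that it suffices for the limit statement in Proposition \ref{lemma: Hamadene}.

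Two further points of comparison. First, for Part 2 you route through Theorem \ref{theorem: conditional expectation} and the identity \eqref{eq: tilde nu}; this is workable but heavier than what the paper does --- the elementary embedding $\nu\wedge\tau_n\in\mathcal{T}(\mathbb{G}^{\tau_n})$ is the only modification of the symmetric-information argument that is actually needed here, and Theorem \ref{theorem: conditional expectation} is reserved for the later step of replacing $\nu^*$ by the $\mathbb{G}$-stopping time $\theta^*$. Second, your remark that the tie-event bookkeeping in Part 2 is handled ``exactly as in Part 1, now with the roles of the two indicators exchanged'' is off the mark: there is no discrepancy to absorb on Player 2's side, because the reward in $W^{2,n}$, namely $X^2_\nu\textbf{1}_{\{\nu<\tau_n\}}+Y^2_{\tau_n}\textbf{1}_{\{\tau_n\leq\nu\}}$, coincides identically with the integrand of $J_2(\tau_n,\nu)$; Player 1's priority at ties is already built into Player 2's auxiliary problem, and it is only Player 1's auxiliary problem that misprices the event $\{\tau=\nu_n<T\}$.
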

	
	\begin{proof}
		The proof is the same as the  proof of Lemma 3.3 in \cite{Hamadene2010}.  The only thing that should be noticed (for the proof of the second part) is that $\nu\in\mathcal{T}(\mathbb{G})$ implies that $\nu\wedge\tau_n\in\mathcal{T}(\mathbb{G}^{\tau_n})$.
	\end{proof}
	\newline\newline
	Observe that the discrete-time modelling with the condition \eqref{eq: integrability} in the definition of $\mathcal{D}$ implies that a standard convergence theorem argument might be used in order to justify the following convergences:
	\begin{enumerate}
		\item $J_1(\tau_n,\nu)\rightarrow J_1(\tau^*,\nu)$ as $n\to\infty$  for every $\nu\in\mathcal{T}(\mathbb{G})$.
		
		\item $J_2(\tau,\nu_n)\rightarrow J_2(\tau,\nu^*)$ as $n\to\infty$ for every $\tau\in\mathcal{T}(\mathbb{F})$.
		
		\item For each $i=1,2$, $J_i(\tau_n,\nu_n)\rightarrow J_i(\tau^*,\nu^*)$ as $n\to\infty$.
	\end{enumerate} 
	Finally, combining these results with the previous lemmata in this subsection yields the proof of Proposition \ref{lemma: Hamadene}. $\blacksquare$ 	
	
	\subsection{Proof of Corollary \ref{cor: equivalence}}\label{subsec: corollary 1}
	Observe that the first direction is an immediate consequence of \eqref{eq: nu*}. In order to prove the other direction, assume that $\theta^*<\tau^*$. Thus, there exists $n'\geq1$ (which is a random variable) such that $\theta_{n'+1}<\tau_n$ for every $n\geq1$. In addition, according to \eqref{eq: tilde nu}, $\nu_{n'+1}=\theta_{n'+1}$, $P$-a.s. Recall that $(\tau_n)_{n\geq1}$ and $(\nu_n)_{n\geq1}$  are nonincreasing sequences. As a result, by construction, $\tau_{n'}=\tau_{n'+1}=\ldots$ and hence
	\begin{equation}
	\tau^*=\lim_{n\to\infty}\tau_n>\theta_{n'+1}\geq\nu^*, \ \ P\text{-a.s.}
	\end{equation}
	which makes \eqref{eq: iff} follows. $\blacksquare$
	
	\subsection{Proof of Lemma \ref{lemma: auxilary}}\label{subsec: lemma 1}
	The best response of Player 2 is the same in both games and hence it is sufficient to show that $\tau_0$ is a best response of Player 1 to $\nu_0$ in $\Gamma$. To this end, observe that \eqref{eq: sufficient condition} yields that $X^1_t+\epsilon>Y_t^1$ for every $0\leq t\leq T$. Therefore, since Player 1 has a priority in stopping and $\tau_0$ is a best response of Player 1 to $\nu_0$ in $\Gamma_\epsilon$, deduce that $\tau_0\leq\nu_0$, $P$-a.s. (otherwise $\tau_0\wedge\nu_0$ is a better response). Now, assume by contradiction that there exists $\tau'\in\mathcal{T}(\mathbb{F})$ such that $J_1(\tau_0,\nu_0)<J_1(\tau',\nu_0)$. Then, \eqref{eq: sufficient condition} with the priority in stopping of Player 1 implies that $\tau'\wedge\nu_0$ is also a better response of Player 1, i.e.,
	\begin{align}
	J_{1,\epsilon}(\tau'\wedge\nu_0,\nu_0)-\epsilon&=J_1(\tau'\wedge\nu_0,\nu_0)\\&\geq J_1(\tau',\nu_0)>J_1(\tau_0,\nu_0)=J_{1,\epsilon}(\tau_0,\nu_0)-\epsilon\nonumber
	\end{align}
	from which a contradiction follows. $\blacksquare$
	\begin{equation*}
	\end{equation*}
\textbf{Acknowledgement:}
The author would like to express his deep gratitude to Yan Dolinsky and Eilon Solan for insightful discussions about the problem as well as for their valuable comments on early versions of the current work. In addition, special thanks are devoted to Reviewer 1 for detecting a mathematical gap in the original manuscript and to Reviewer 2 whose suggestions lead to some major changes in the structure.


\begin{thebibliography}{99}
	\bibitem{Ashkenazi-Golan2020}
	Ashkenazi-Golan, G., Rainer, C., \& Solan, E. (2020). Solving two-state Markov games with incomplete information on one side. Games and Economic Behavior, \textbf{122}, 83-104.
	
	\bibitem{Attard2017}
	Attard, N. (2017). Nash equilibrium in nonzero-sum games of optimal stopping for Brownian motion. {\em Advances in Applied Probability}, \textbf{49}, 430-445.
	
	\bibitem{Attard2018}
	Attard, N. (2018). Nonzero-sum games of optimal stopping for Markov processes. {\em Applied Mathematics AND Optimization}, \textbf{77}, 567-597.
	
	\bibitem{Aumann1995}
	Aumann, R. J., Maschler, M., \& Stearns, R. E. (1995). {\em Repeated games with incomplete information.} MIT press.
	
	\bibitem{Bismut1977}
	Bismut, J. M. (1977). Sur un probleme de Dynkin. {\em Zeitschrift f\"ur Wahrscheinlichkeitstheorie und Verwandte Gebiete}, \textbf{39}, 31-53.
	
	\bibitem{Bruss1998}
	Bruss, F. T., Drmota, M., \& Louchard, G. (1998). The complete solution of the competitive rank selection problem. \textit{Algorithmica}, \textbf{22}, 413-447.‏
	
	\bibitem{Chen1997}
	Chen, R. W., Rosenberg, B., \& Shepp, L. A. (1997). A secretary problem with two decision makers. \textit{Journal of Applied Probability}, \textbf{34}, 1068-1074.‏
	
	\bibitem{De Angelis2021}
	De Angelis, T., Ekstr\"om, E., \& Glover, K. (2021). Dynkin games with incomplete and asymmetric information. {\em Mathematics of Operations Research}.
	
	\bibitem{De Angelis2018a}
	De Angelis, T., \& Ferrari, G. (2018). Stochastic nonzero-sum games: a new connection between singular control and optimal stopping. {\em Advances in Applied Probability}, \textbf{50}, 347-372.
	
	\bibitem{De Angelis2018b}
	De Angelis, T., Ferrari, G., \& Moriarty, J. (2018). Nash equilibria of threshold type for two-player nonzero-sum games of stopping. {\em The Annals of Applied Probability}, \textbf{28}, 112-147.
	
	\bibitem{Dynkin1967}
	Dynkin, E. (1967). Game variant of a problem on optimal stopping. {\em Soviet Math. Dokl}. \textbf{10}, 270-274.
	
	\bibitem{Enns1985}
	Enns, E. G., \& Ferenstein, E. (1985). Horse game. Journal of the Operations Research
	Society of Japan, \textbf{28}, 51-62.‏
	
	\bibitem{Esmaeeli2019}
	Esmaeeli, N., Imkeller, P., \& Nzengang, V. (2019). Dynkin game with asymmetric information. {\em Bulletin of the Iranian Mathematical Society}, \textbf{45}, 283-301.
	
	\bibitem{Ekstrom2008}
	Ekstr\"om, E., \& Peskir, G. (2008). Optimal stopping games for Markov processes. \textit{SIAM Journal on Control and Optimization}, \textbf{47}, 684-702.
	
	\bibitem{Ferguson1989}
	Ferguson, T. S. (1989). Who solved the secretary problem? \textit{Statistical science}, \textbf{4}, 282-289.
	
	\bibitem{Freeman1983}
	Freeman, P. R. (1983). The secretary problem and its extensions: A review. \textit{International Statistical Review/Revue Internationale de Statistique}, 189-206.
	
	\bibitem{Fushimi1981}
	Fushimi, M. (1981). The secretary problem in a competitive situation. \textit{Journal of the Operations Research Society of Japan}, \textbf{24}, 350-359.
	
	\bibitem{Gapeev2021}
	Gapeev, P. V., \& Rodosthenous, N. (2021). Optimal stopping games in models with various information flows. \textit{Stochastic Analysis and Applications}, \textbf{39}, 1050-1094.
	
	\bibitem{Gensbittel2019}
	Gensbittel, F., \& Grün, C. (2019). Zero-sum stopping games with asymmetric information. \textit{Mathematics of Operations Research}, \textbf{44}, 277-302.
	
	\bibitem{Ghemawat}
	Ghemawat, P., \& Nalebuff, B. (1985). Exit. {\em The RAND Journal of Economics}, \textbf{16}, 184-194.
	
	\bibitem{Grun2013}
	Gr\"un, C. (2013). On Dynkin games with incomplete information. {\em SIAM Journal on Control and Optimization}, \textbf{51}, 4039-4065.
	
	\bibitem{Hamadene2006}
	Hamad\'ene, S. (2006). Mixed zero-sum stochastic differential game and American game options. {\em SIAM Journal on Control and Optimization}, \textbf{45}, 496-518.
	
	\bibitem{Hamadene2014a}
	Hamad\'ene, S., \& Hassani, M. (2014). The multi-player nonzero-sum Dynkin game in discrete-time. {\em Mathematical Methods of Operations Research}, \textbf{79}, 179-194.
	
	\bibitem{Hamadene2014b}
	Hamad\'ene, S., \& Mohammed, H. (2014). The multiplayer nonzero-sum Dynkin game in continuous time. {\em SIAM Journal on Control and Optimization}, \textbf{52}, 821-835.
	
	\bibitem{Hamadene2010}
	Hamad\'ene, S., \& Zhang, J. (2010). The continuous time nonzero-sum Dynkin game problem and application in game options. \textit{SIAM Journal on Control and Optimization}, 48(5), 3659-3669.
	
	\bibitem{Jaskiewicz2018}
	Ja\'skiewicz, A., \& Nowak, A. S. (2018). Non-zero-sum stochastic games. \textit{Handbook of dynamic game theory}, 1-64.
	
	\bibitem{Kiffer1971}
	Kifer, Y. I. (1971). Optimal stopped games. {\em Theory of Probability and Its Applications}, \textbf{16}, 185-189.
	
	\bibitem{Kifer2000}
	Kifer, Y. (2000). Game options. {\em Finance and Stochastics}, \textbf{4}, 443-463.
	
	\bibitem{Lang1975}
	Lang, J. P., \& Kimeldorf, G. (1975). Duels with continuous firing. {\em Management Science}, \textbf{22}, 470-476.
	
	\bibitem{Laraki2000}
	R. Laraki. (2000).
	{\em Repeated games with incomplete information: a variational approach}. UPMC, Paris, France.
	
	\bibitem{Laraki2013}
	Laraki, R., \& Solan, E. (2013). Equilibrium in two-player non-zero-sum Dynkin games in continuous time. {\em Stochastics An International Journal of Probability and Stochastic Processes}, \textbf{85}, 997-1014.
	
	\bibitem{Laraki2015}
	Laraki, R., \& Sorin, S. (2015). Advances in zero-sum dynamic games. {\em Handbook of game theory with economic applications} \textbf{4} pp. 27-93. Elsevier.
	
	\bibitem{Lempa2013}
	Lempa, J., \& Matom\"aki, P. (2013). A Dynkin game with asymmetric information. {\em Stochastics An International Journal of Probability and Stochastic Processes}, \textbf{85}, 763-788.
	
	\bibitem{Lepeltier 1984}
	Lepeltier, J. P., \& Maingueneau, E. M. (1984). Le jeu de Dynkin en th\'eorie g\'en\'erale sans l'hypoth\'ese de Mokobodski. {\em Stochastics: An International Journal of Probability and Stochastic Processes}, \textbf{13}, 25-44.
	
	\bibitem{Mamer1987}
	Mamer, J. W. (1987). Monotone stopping games. {\em Journal of Applied Probability}, \textbf{24}, 386-401.
	
	\bibitem{Morimoto1986}
	Morimoto, H. (1986). Non-zero-sum discrete parameter stochastic games with stopping times. {\em Probability theory and related fields}, \textbf{72}, 155-160.
	
	\bibitem{Neveu1975}
	Neveu, J. (1975). {\em Discrete-parameter martingales}. North-Holland, Amsterdam.
	
	\bibitem{Novak1999}
	Nowak, A. S., \& Szajowski, K. (1999). Nonzero-sum stochastic games. In \textit{Stochastic and differential games} (pp. 297-342). Birkh\"auser, Boston, MA.
	
	\bibitem{Ohtsubo1987}
	Ohtsubo, Y. (1987). A nonzero-sum extension of Dynkin's stopping problem. {\em Mathematics of Operations Research}, \textbf{12}, 277-296.
	
	\bibitem{Ohtsubo1991}
	Ohtsubo, Y. (1991). On a discrete-time non-zero-sum Dynkin problem with monotonicity. {\em Journal of applied probability}, \textbf{28}, 466-472.
	
	\bibitem{Peshkir2009}
	Peskir, G. (2009). Optimal stopping games and Nash equilibrium. \textit{Theory of Probability and Its Applications}, \textbf{53}, 558-571.
	
	\bibitem{Peshkir2006}
	Peskir, G., \& Shiryaev, A. (2006). {\em Optimal stopping and free-boundary problems}. Birkh\"auser Basel.
	
	\bibitem{Ramsey2007}
	Ramsey, D. M. (2007). A model of a 2-player stopping game with priority and asynchronous observation. \textit{Mathematical Methods of Operations Research}, \textbf{66}, 149-164.
	
	\bibitem{Renault2006}
	Renault, J. (2006). The value of Markov chain games with lack of information on one side. \textit{Mathematics of Operations Research}, \textbf{31}, 490-512.
	
	\bibitem{Rosenberg 2001}
	Rosenberg, D., Solan, E., \& Vieille, N. (2001). Stopping games with randomized strategies. {\em Probability theory and related fields}, \textbf{119}, 433-451.
	
	\bibitem{Sakaguchi1980}
	Sakaguchi, M. (1980). Non-zero-sum games related to the secretary problem. \textit{Journal of the Operations Research Society of Japan}, \textbf{23}, 287-293.
	
	\bibitem{Sakaguchi1995}
	Sakaguchi, M. (1995). Optimal stopping games: A review. \textit{Mathematica japonicae}, \textbf{42}, 343-351.
	
	\bibitem{Skarupski2018}
	Skarupski, M., \& Szajowski, K. (2018). Full vs. no information best choice game with finite horizon. \textit{arXiv preprint arXiv:1809.02955}.
	
	\bibitem{Shmaya2004}
	Shmaya, E., \& Solan, E. (2004). Two-player nonZero–sum stopping games in discrete-time. {\em The annals of probability}, \textbf{32}, 2733-2764.
	
	\bibitem{Yasuda1985}
	Yasuda, M. (1985). On a randomized strategy in Neveu's stopping problem. \textit{Stochastic Processes and their Applications}, \textbf{21}, 159-166.
\end{thebibliography}
\end{document}